\newcommand{\la}{\langle}
\newcommand{\ra}{\rangle}
\newtheorem{theorem}{Theorem}
\newtheorem{proposition}[theorem]{Proposition}
\newtheorem{lemma}[theorem]{Lemma}
\newtheorem{assumption}[theorem]{Assumption}
\theoremstyle{remark}
\newtheorem{remark}[theorem]{Remark}
\numberwithin{equation}{section}
\numberwithin{theorem}{section}
\numberwithin{table}{section}
\numberwithin{figure}{section}
\title[Equilibria of the 3D Axisymmetric RVM System]{Continuous Family of Equilibria of the 3D Axisymmetric Relativistic Vlasov-Maxwell System}
\author{Katherine Zhiyuan Zhang}
\address{Courant Institute of Mathematical Sciences, New York University}
\begin{document}




\begin{abstract}
We consider the relativistic Vlasov-Maxwell system (RVM) on a general axisymmetric spatial domain with perfect conducting boundary which reflects particles specularly, assuming axisymmetry in the problem. We construct continuous global parametric solution sets for the time-independent RVM. The solutions in these sets have arbitrarily large electromagnetic field and the particle density functions have the form $f^\pm = \mu^\pm (e^\pm (x, v), p^\pm (x, v))$, where $e^\pm$ and $p^\pm$ are the particle energy and angular momentum, respectively. In particular, for a certain class of examples, we show that the spectral stability changes as the parameter varies from $0$ to $\infty$. 
\end{abstract}

\maketitle

\section{Introduction}
\label{S:Intro}

We consider the 3D relativistic Vlasov-Maxwell (RVM) system on an axisymmetric bounded domain $\Omega \subset \mathbb{R}^3$. We study a plasma with two species (ion and electron) with non-negative distribution functions $f^\pm(t, x, v)$, where $t \geq 0$, $x \in \Omega \subset \mathbb{R}^3$, $v \in \mathbb{R}^3$.
The system RVM is
\begin{equation} \label{Vlasovtwospecies}
\partial_t f^{\pm} + \hat{v} \cdot \nabla_x f^{\pm} \pm (\textbf{E} + \hat{v} \times \textbf{B}   ) \cdot \nabla_v f^{\pm} =0 ,
\end{equation}
\begin{equation}  \label{Maxwelltwospecies-1}
\nabla_x \cdot \textbf{E} = \rho = \int_{\mathbb{R}^3} (f^+ - f^-)dv, \  \nabla_x \cdot \textbf{B} =0 ,
\end{equation}
\begin{equation}  \label{Maxwelltwospecies-2}
\partial_t \textbf{E} -\nabla_x \times \textbf{B} = -\textbf{j} = -  \int_{\mathbb{R}^3} \hat{v} (f^+ - f^-)dv, \  \partial_t \textbf{B} + \nabla_x \times \textbf{E} =0 .
\end{equation}
In this system, $f^\pm (t, x, v) \geq 0$ is the density distribution of ions ($+$) and electrons ($-$). We consider the plasma in a region $ \Omega \subset \mathbb{R}^3$, so that $x \in \Omega$ is the particle position. $v \in \mathbb{R}^3$ is the particle momentum, $\la v\ra = \sqrt{1+ v^2}$ is the particle kinetic energy, and $\hat{v} = v/ \la v \ra$ is the particle velocity. Also, $\textbf{E}$ is the electric field, $\textbf{B}$ is the internal magnetic field, so $ \pm (\textbf{E} + \hat{v} \times \textbf{B} ) $ is the total (internal) electromagnetic force. Moreover, the charge density $\rho$ and the current density $\textbf{j}$ are defined as
\begin{equation}
\rho = \int_{\mathbb{R}^3} (f^+ - f^-) dv , \qquad \textbf{j} =  \int_{\mathbb{R}^3} \hat{v} (f^+ - f^-) dv  .
\end{equation}
A key direction of study is to construct the stationary solutions (equilibria) of the RVM system on a bounded domain with varied boundary conditions. 

In this paper, at the boundary we impose the specular condition (which means that $f^\pm$ is even with respect to  $v_n= v \cdot e_n $ on $\partial \Omega$, with $e_n$ being the outward normal vector of $\partial \Omega$ at $x$): 
\begin{equation}
f^{\pm} (t,x,v) = f^{\pm} (t,x,v-2(v \cdot e_n (x))e_n(x) ), e_n(x) \cdot v <0,  \  \forall x \in \partial \Omega  ,
\end{equation}
as well as the perfect conductor boundary condition
\begin{equation} 
\textbf{E}(t,x) \times n(x) =0,  \ \textbf{B} (t,x) \cdot n(x) =0, \ \forall x \in \partial\Omega  .
\end{equation}
We assume axisymmetry in the problem. Therefore it is natural to use cylindrical coordinates here. 



We aim to construct equilibrium solutions for the system \eqref{Vlasovtwospecies} -- \eqref{Maxwelltwospecies-2} with the boundary conditions above. We consider equilibria with the form
$$f^\pm = \mu^\pm (e^\pm (x, v), p^\pm (x, v)) $$
where $e^\pm$ and $p^\pm$ are the particle energy and angular momentum, respectively, and use a potential formulation together with the axisymmetry assumption of \eqref{Vlasovtwospecies} -- \eqref{Maxwelltwospecies-2} so as to the equations satisfied by the equilibria can be reformulated as a coupled elliptic system (see \eqref{system01-0} -- \eqref{system03-0} below). The resulting elliptic system is highly nonlinear on its right hand side.

\subsection{Literature review}

Investigation for time-independent RVM on a domain with physical boundaries has been carried out over the years. Rein \cite{Rein1} studied weak stationary solutions of the RVM by variational methods. Batt \& Fabian \cite{BattFabian}, Braasch \cite{Braasch1}, Poupaud \cite{Poupaud1}, Sinitsyn \& Dulov \cite{SD1} constructed stationary solutions for boundary value problems making use of sub-solutions and super-solutions. In particular, Batt \& Fabian \cite{BattFabian} constructed a general family of large stationary solutions for boundary value problems when the domain is axially symmetric and does not touch the $z$-axis and Braasch \cite{Braasch1} constructed large stationary solutions for problems with a specific type of axial symmetric domain. Poupaud \cite{Poupaud1} solves for large equilibria for general domains (not necessarily axially symmetric) with compact and connected boundary as well as general Dirichlet boundary conditions. Sidorov \& Sinitsyn \cite{SS1} considered a specific parametrized family of (single species) particle density distribution, and gave sufficient conditions for the existence of bifurcation points corresponding to distribution functions of a certain form. Moreover, the lower dimensional models have also been considered. For example, Schaeffer \cite{Schaeffer1} studied the stationary Vlasov-Maxwell equation (VM) in the 1.5D setting, when the spatial domain being the real line $\mathbb{R}$, see also Guo \cite{G1}.  

The RVM with the presence of an external magnetic field is also of physical concern. This type of problem is considered in Ben-Artzi-Holding \cite{BH1}, in which confined stationary solutions of the 1.5D and 3D cylindrically symmetric RVM are constructed. Also, Weber \cite{Weber1} constructed confined stationary solutions of the RVM in a long cylinder in the 2.5D setting. Moreover, Lin \& Strauss \cite{LS1} constructed confined small stationary solutions of the RVM in 3D (see the appendix of \cite{LS1}). Such type of solutions are related to the mechanism of \emph{magnetic confinement}, see, for example, \cite{NS3}, \cite{Z4}. 



On the other hand, in plasma theory, an important goal is to study the stability properties of plasmas. 
A pioneer result is the renowned Penrose's sharp criterion derived in \cite{Penrose1}, which is a sharp criterion on linear stability for a spatially homogeneous equilibrium of the Vlasov-Poisson system. In the subsequent papers \cite{G1}, \cite{GS3}, \cite{LS1}, \cite{LS3} and \cite{LS2}, analysis of the linear stability of a spatially inhomogeneous equilibrium was carried out in domains without spatial boundaries (i.e. whole space or periodic setting). The question of nonlinear stability is much more difficult, see, for example, \cite{LS3}. Moreover, a topic of physical importance is to understand the stability properties of a confined plasma. \cite{HK1} discussed the confinement of a tokamak plasma is discussed using some fluid models. For the microscopic model RVM in bounded domains, few rigorous studies are available. \cite{BH1} studied the instability for confined RVM with symmetry on unbounded domains. In \cite{NS1} and \cite{Z1}, the authors considered the case when the spatial domain is a torus or an axisymmetric bounded region (for example, a tokamak), and toroidal or axisymmetric symmetry is assumed. A sharp criterion of spectral stability was obtained, reducing the problem of determining the linear stability to the positivity of a simpler self-adjoint operator $\mathcal{L}^0$. 

\subsection{Description of main results}



\subsubsection{Construction of global sets of equilibria}

The main purpose of the paper is to construct equilibrium solutions for the system \eqref{Vlasovtwospecies} -- \eqref{Maxwelltwospecies-2}. Our strategy is to consider parametrized families of particle density distribution function for the equilibria, and to apply the global implicit function theorem (Theorem \ref{globalimplicitfunctiontheorem}) so as to obtain global solution sets that branch out from "trivial" equilibrium solutions. The result is stated in Theorem \ref{nonneutralglobalthm-general-1}. Similar methods have been applied in varied PDEs, see, for example, \cite{CSV1}, \cite{CWW1}, \cite{CWW2}, \cite{SS1}, \cite{SW1}.  

 





We consider a general parametrized family of particle density distribution $f^\pm = \mu^\pm (e^\pm (x, v), p^\pm (x, v))$, and construct a \emph{global continuum/loop} of equilibria branching out from the trivial or neutral steady states for the RVM on a general axisymmetric spatial domain with perfect conducting boundary which reflects particles specularly, assuming axisymmetry in the problem. Moreover, in the single species case, with some additional constraints, the global continuum/loop of equilibria we constructed contains a locally analytic curve that is \emph{unbounded}, see Theorem \ref{nonneutralglobalthm-general-analytic-1}. 

Our approach enables us to observe the structure of the solution set, and reveal more information about the change of the electromagnetic potential (and therefore the electromagnetic field) as the density distribution function changes in different ways. In the case when the solution set is unbounded, we make use of the maximum principles and elliptic estimates to investigate size properties of the equilibria constructed in Theorem \ref{nonneutralglobalthm-general-1}. In particular, we construct a continuum of equilibrium solutions with arbitrarily large electromagnetic field, see Proposition \ref{P:prop-sol-general}. It is an open question whether the solutions obtained in this paper are the same as the ones constructed in the historical literature \cite{Rein1}, \cite{BattFabian}, \cite{Braasch1}, \cite{Poupaud1}, \cite{SD1}, etc.. Our approach can also be applied in the case when an external magnetic field is present, see Remark \ref{R:C-Setting}.

\subsubsection{Spectral stability property of equilibria in the solution set}

We also investigate the change of spectral stability property along a family of solutions constructed in Theorem \ref{nonneutralglobalthm-general-1} using the result obtained in \cite{Z1}. Specifically, we track down the solutions along the solution set as the parameter $K$ changes from $0$ to $\infty$, and show that the corresponding equilibrium solutions gradually become spectrally unstable as $K$ grows large. The details are given in Section \ref{S:spectral-stability}, Proposition \ref{smallK-stable} and Theorem \ref{unstableexample2-general}.

\subsubsection{Example}

In addition, we provide a single species example on which the results above can be applied, where the ion density distribution function is parametrized as 
$$ \mu^{K, +} (e,  p) = a^+(K)  \mu^{\pm} (e,  K p) , $$
where $K \in [0, \infty)$ is the parameter, $a^+(K)$ and $\mu^{+} (e, p)$ are fixed non-negative functions (of $K$ and $(e, p)$, respectively) satisfying certain conditions. Unbounded solution sets of single species equilibria can then be obtained, solutions in these sets have arbitrarily large electromagnetic field (as $K$ gets large). Moreover, the equilibria turn from spectrally stable to unstable as the parameter $K$ changes from $0$ to $\infty$. 




\subsection{Organization of the paper}

The contents in the paper are arranged as follows. In Section \ref{S:Setup}, we set up the problem in cylindrical coordinates, including the coordinates and the symmetry assumptions. The description of the particle trajectories and the family of equilibria we consider in this paper are given, and relevant function spaces are introduced. The difficulty caused by the singularity at $r=0$ is avoided by using the spaces $H^{k \dagger}$ and $\mathcal{X}$ (see \eqref{Hkdaggerdef} and \eqref{spaceXdef}). In Section \ref{S:global-cont-I} -- \ref{S:global-cont-sp}, we consider the time-independent RVM (with no external magnetic field). In Section \ref{S:global-cont-I}, we consider a family of parametrized particle density distribution functions and construct an unbounded solution set/loop for the time-independent problem \eqref{system01-0}--\eqref{system03-0} with $\textbf{A}^0 = A^0_\varphi e_\varphi $, which gives a steady solution set for RVM. Furthermore, in the single species case, with additional constraints, the global continuum/loop of equilibria we constructed contains a locally analytic curve that is unbounded, yielding a global steady solution set which contains an unbounded analytic steady solution curve for RVM. Section \ref{S:prop-sol-general} makes use of the maximum principles for elliptic equations and gives some properties of the equilibria constructed in Section \ref{S:global-cont-I}, Theorem \ref{nonneutralglobalthm-general-1}. It is then shown in Section \ref{S:spectral-stability} that by properly choosing ways of parametrization and apply results in \cite{Z1}, one can create solution sets along which the linear stability changes. After that, in Section \ref{S:global-cont-sp}, we give an example of parametrized family of particle density distribution functions, where Assumption \ref{Ass: mu-general-1}, Assumption \ref{prop-sol-assump} and Assumption \ref{Ass:stability-general} are all satisfied, and apply the theory in Sections \ref{S:global-cont-I}, \ref{S:prop-sol-general}, \ref{S:spectral-stability}. Lastly, for the readers' convenience, we provide the formulae for derivative operators in the cylindrical coordinates as well as the particle energy $e^\pm (x,v) = \la v \ra \pm \phi^0 (r, z)  $ and the particle angular momentum $ p^\pm (x,v) = r  ( v_\varphi  \pm A^0_\varphi (r, z))  $ (or $ p^\pm (x,v) = r  ( v_\varphi  \pm A^0_\varphi (r, z) \pm A_{\varphi, ext} (r, z) )  $ when an external magnetic potential $\textbf{A}_{ext} = A_{\varphi, ext} (r, z) e_\varphi$ is present) in Appendix \ref{AppendixA} and \ref{AppendixB}, respectively. Important theorems and lemmas that are applied as key tools in this paper are stated in Appendix \ref{AppendixC}. This includes the local and global implicit function theorems, which are utilized to provide the existence of the steady states.

\begin{flushleft}
\textbf{Acknowledgement.} The author would like to express her gratitude to her advisor, Professor Walter Strauss, for bringing this research topic to her attention, and also for the invaluable guidance, encouragement and patience, without which this work would be impossible. 
\end{flushleft}

\section{Set up}
\label{S:Setup}

Let us introduce the basic settings of the stationary RVM problem. We consider the 3D relativistic Vlasov-Maxwell (RVM) system \eqref{Vlasovtwospecies} -- \eqref{Maxwelltwospecies-2} on the axisymmetric bounded domain $\Omega \subset \mathbb{R}^3$ with axisymmetry in the problem, and use cylindrical coordinates $(r, \varphi , z )$. We consider a plasma constrained inside a closed bounded $C^1$ axisymmetric domain $\Omega$ in $\mathbb{R}^3$, i.e. $\Omega$ is rotational invariant around the $z$-axis. $\Omega$ can be viewed as a solid of revolution, determined as follows: Consider a counterclockwisely parametrized $C^1$ curve $\mathcal{C}$ in the plane $\{  \varphi = 0\}$, where $\beta$ is the arclength parameter:
\begin{equation}
r = \tilde{r}(\beta), \quad z = \tilde{z} (\beta ), \quad \beta \in [0, 1], \quad \tilde{r}  > 0  \ \text{for all} \ \beta \in (0, 1) , \quad \tilde{r}, \tilde{z} \in C^1 (0, 1)   ,
\end{equation}
with either
\begin{equation}
\tilde{r} (0) = \tilde{r} (1) > 0 ,  \quad \tilde{z} (0) = \tilde{z} (1) ,  \quad \tilde{z}' (0) = \tilde{z}' (1)   ,
\end{equation}
or, 
\begin{equation}
\tilde{r}(0) =\tilde{r} (1) =0  ,  \quad   \tilde{z}' / \tilde{r}' =0 \ \text{at} \ \beta = 0, 1   .
\end{equation}
Rotating $\mathcal{C}$ around the $z$-axis gives the boundary for the domain $\partial \Omega$. 
In all these cases, the boundary $\partial \Omega$ is $C^1$ smooth. We denote
$$d := \sup_{x \in \Omega} r(x) .  $$
Then $d < +\infty$.



Let $e_r$, $e_\varphi$ and $e_z$ be the unit vectors in the cylindrical coordinate system (see Appendix \ref{AppendixA}), and $e_n$, $e_{tg}$ to be the unit vectors in outward normal direction and tangential direction orthogonal to $e_\varphi$ on $\partial \Omega$, respectively. Then on $\partial \Omega$, we can write the outward unit normal vector as $e_n (x)  = ( \tilde{z}' e_r - \tilde{r}' e_z)/\sqrt{\tilde{z}'^2+\tilde{r}'^2} $, and $e_{tg} (x) =  ( - \tilde{r}'  e_r - \tilde{z}' e_z)/\sqrt{\tilde{z}'^2+\tilde{r}'^2} $.

On the boundary, we assume the specular condition on the density function $f^\pm$ in case the particle hits the boundary: 
\begin{equation} \label{specularBC}
f^{\pm} (t,x,v) = f^{\pm} (t,x,v-2(v \cdot e_n (x)) e_n (x) ), \  e_n(x) \cdot v <0,  \forall x \in \partial \Omega, \forall v \in \mathbb{R}^3  .
\end{equation}
For the electric and magnetic fields, we put down the perfect conductor boundary condition: 
\begin{equation} \label{perfectconductorboundary}
\textbf{E}(t,x) \times e_n(x) =0, \quad  \textbf{B} (t,x) \cdot e_n(x) =0, \forall x \in \partial\Omega  .
\end{equation}
We introduce the electric potential $\phi$ and the magnetic potential $\textbf{A}$:
\begin{equation} \label{potentialdefinition}
\textbf{E} = -\nabla \phi -\partial_t \textbf{A}, \quad  \textbf{B} = \nabla \times \textbf{A} 
\end{equation} 
and impose the Coulomb gauge
\begin{equation}
\nabla \cdot \textbf{A} =0  . 
\end{equation}

\begin{remark}
The choices of $\phi$ and $\textbf{A}$ are not unique. In fact, the choice of $\textbf{A}$ can differ by the gradient of a harmonic function.
\end{remark}

We use the cylindrical coordinates $(r, \varphi , z)$, and make the axisymmetry assumption:
\begin{equation}  \label{varphiindependenceassumption}
\phi, \ A_r, \  A_z,  \ A_\varphi \ \text{are independent  of} \ \varphi.
\end{equation}
Therefore, $f^{\pm}$ does not depend \textit{explicitly} on $\varphi$, although it might depend on it \textit{implicitly} through the components of $v$. The Maxwell system can be reformulated as
\begin{equation}  \label{RVM-potentialform-1}
-\Delta \phi = \rho   , 
\end{equation}
\begin{equation}   \label{RVM-potentialform-2}
(\partial_t^2 -\Delta + \frac{1}{r^2}) A_\varphi  = j_\varphi   , 
\end{equation}
\begin{equation}   \label{RVM-potentialform-3}
(\partial_t^2 -\Delta ) \tilde{\textbf{A}} + \partial_t \nabla \phi  = \tilde{\textbf{j}}  , 
\end{equation}
where $\tilde{\textbf{A}} := A_r e_r + A_z e_z$.

We consider an equilibrium field, with the potential denoted by $(\phi^0, A_\varphi^0, \tilde{\textbf{A}}^0)$:
\begin{equation} \label{system01-0}
-\Delta \phi^0 = \rho^0   , 
\end{equation}
\begin{equation} \label{system02-0}
( -\Delta + \frac{1}{r^2}) A_\varphi^0  = j_\varphi^0   , 
\end{equation}
\begin{equation} \label{system03-0}
(  -\Delta ) \tilde{\textbf{A}}^0  = \tilde{\textbf{j}}^0   .
\end{equation}
We assume that the magnetic potential $\textbf{A}^0$ satisfies
$$\textbf{A}^0 = A^0_\varphi e_\varphi ,  $$ 
which implies
$$B^0_\varphi =0 . $$
The time-independent Maxwell system \eqref{system01-0} -- \eqref{system03-0} is now reduced to 
\begin{equation} \label{system01}
-\Delta \phi^0 = \rho^0   , 
\end{equation}
\begin{equation} \label{system02}
( -\Delta + \frac{1}{r^2}) A_\varphi^0  = j_\varphi^0  . 
\end{equation}

According to \eqref{perfectconductorboundary}, we assume the following boundary conditions for the potentials, Section 4 in \cite{Z1} for details: 
\begin{equation}  \label{fullboundarycondition}
\phi^0 =0, \qquad  A_\varphi^0 =0, \qquad   A_{tg}^0 =0, \qquad  \tilde{z}' \partial_r A_n^0 - \tilde{r}' \partial_z A_n^0 + \frac{\tilde{z}'}{\tilde{r}} A_n^0 =0, \qquad \forall x \in \partial \Omega  . 
\end{equation}
Then the equilibrium field is
\begin{equation}
\textbf{E}^0 = -\nabla \phi^0 = -\frac{\partial \phi^0}{\partial r} e_r    - \frac{\partial \phi^0}{ \partial z} e_z   , 
\end{equation}
\begin{equation}
\begin{split}
\textbf{B}^0 
& =  - \frac{\partial A^0_\varphi}{\partial z} e_r +  \frac{1}{r}  \frac{\partial ( r A^0_\varphi )}{\partial r} e_z    .  \\
\end{split}
\end{equation}
The boundary condition \eqref{fullboundarycondition} then becomes
\begin{equation}  \label{simplifiedboundarycondition}
\phi^0 =0, \qquad  A_\varphi^0 =0, \qquad \forall x \in \partial \Omega  . 
\end{equation}
We define the particle trajectories as
\begin{equation}  \label{particletrajectoryODE}
\dot{X}^\pm = \hat{V}^\pm,  \  \dot{V}^\pm =  \pm \textbf{E}^0 (X^\pm) \pm \hat{V}^\pm \times \textbf{B}^0 (X^\pm)
\end{equation}
with initial values $(X^\pm (0;x,v), V^\pm (0;x,v) ) = (x,v) $. Each particle trajectory exists and preserves the axisymmetry up to the first time it meets the boundary. Let $s_0$ be a time when the trajectory $X^\pm (s_0 -; x,v)$ hits the boundary $\partial \Omega$. Recall that $v_n  = v \cdot e_n = \frac{\tilde{z}' v_r - \tilde{r}' v_z}{\sqrt{ \tilde{z}'^2 + \tilde{r}'^2 }}$, $v_{tg} = v \cdot e_{tg}  = \frac{- \tilde{z}' v_z - \tilde{r}' v_r}{\sqrt{\tilde{z}'^2 + \tilde{r}'^2}}$. For any given $(x, v)$ and $(X^\pm, V^\pm)$ with $x$ and $X^\pm$ on $\partial \Omega$, we re-decompose $v$ and $V^\pm$ into their $n$-component, $tg$-component and $\varphi$- component: $v =  v_n e_n + v_{tg} e_{tg} +  v_\varphi e_\varphi   $, $V^\pm =  V^\pm_n e_n + V^\pm_{tg} e_{tg} +  V^\pm_\varphi e_\varphi   $, and define 
\begin{equation}  \label{reflectedvelocity}
v_* = - v_n e_n + v_{tg} e_{tg} +v_\varphi e_\varphi \ , \ V^\pm_* = - V^\pm_n e_n + V^\pm_{tg} e_{tg} +  V^\pm_\varphi e_\varphi  .
\end{equation}
Thus from the specular boundary condition, the trajectory can be continued by the rule
\begin{equation}   \label{particlereflection}
( X^\pm (s_0 + ; x,v), V^\pm (s_0 + ; x,v)  ) = ( X^\pm (s_0 - ; x,v), V_*^\pm (s_0 - ; x,v)  )   .
\end{equation}


Furthermore, we consider the following invariants along the particle trajectories given by \eqref{particletrajectoryODE}
\begin{equation}  \label{epdefinition}
\begin{split}
& e^\pm (x,v) = \la v \ra \pm \phi^0 (r, z)  ,   \  p^\pm (x,v) = r  ( v_\varphi  \pm A^0_\varphi (r, z))  . \\
\end{split}
\end{equation}
The proof of the invariance of $e^\pm$ and $p^\pm$ can be found in Appendix \ref{AppendixB}. We assume the equilibrium has a particle density of the form 
$$f^{0,\pm}  (x,v) = \mu^\pm (e^\pm (x,v), p^\pm (x,v)  ) ,  $$
where $\mu^{\pm} (e, p)$ are non-negative $C^1$ functions which satisfy
\begin{equation} \label{decayassumption-0}
\begin{split}
& |\mu^\pm  (e , p )|+ |\mu^\pm_p (e , p )| +|\mu^\pm_e (e, p)| \leq \frac{C_\mu }{1+ |e|^\delta }, \ \delta > 3 . \\
\end{split}
\end{equation}
Then $f^{0,\pm}  (x,v)$ are invariant along the particle trajectories given by \eqref{particletrajectoryODE}. 

\begin{remark} \label{R:C-Setting}
Our results can also be applied to the RVM with an external magnetic field being applied. Consider an external magnetic potential
$$\textbf{A}_{ext} = A_{\varphi, ext} e_\varphi , \  A_{\varphi, ext} \text{ is independent of } \varphi, \text{ only depends on } r, z .  $$ 
Hence the corresponding external magnetic field satisfies $B_{\varphi, ext} =0$. Again, we assume the equilibrium has a particle density of the form $f^{0,\pm}  (x,v) = \mu^\pm (e^\pm (x,v), p^\pm (x,v)  )  $, where 
\begin{equation}  \label{epdefinition-ext}
\begin{split}
& e^\pm (x,v) = \la v \ra \pm \phi^0 (r, z)  ,   \  p^\pm (x,v) = r  ( v_\varphi  \pm A^0_\varphi (r, z) \pm  A_{\varphi, ext} (r, z))  . \\
\end{split}
\end{equation}
$e^\pm$ and $p^\pm$ are invariant along the particle trajectories 
\begin{equation}  \label{particletrajectoryODE-ext}
\dot{X}^\pm = \hat{V}^\pm,  \  \dot{V}^\pm =  \pm \textbf{E}^0 (X^\pm) \pm \hat{V}^\pm \times \textbf{B}^0 (X^\pm) \pm \hat{V}^\pm \times \textbf{B}_{ext} (X^\pm) .
\end{equation}
with all the other assumptions as mentioned in the previous case without an external magnetic field, see Appendix B.
\end{remark}

We are going to solve the time-independent Maxwell system with the boundary conditions above as well as $\textbf{A}^0 = A^0_\varphi e_\varphi $, which simplifies to \eqref{system01} -- \eqref{system02}). 
Denote $Y =L^2_{1/r^2 } (\Omega)$, where the weight $1/r^2$ gives some singularity at $r=0$ if $\Omega$ touches the $z$-axis. For this, we define  
\begin{equation} \label{Hkdaggerdef}
H^{k \dagger}  (\Omega) := \{  g \in L^{2, \tau} (\Omega) |  e^{i\varphi} g \in H^k (\Omega)   \}  (k=1,2)   . 
\end{equation}
Here the letter $\tau$ denotes the axisymmetric constraint for functions. We then have $H^{2 \dagger} (\Omega) \subset Y$ because the identity 
\begin{equation} \label{Deltagexp}
-\Delta (g e^{i\varphi}) = (- \Delta + \frac{1}{r^2} ) g e^{i \varphi} 
\end{equation}  
holds for any $\varphi$-independent function $g$. 
We will use the following space regularly:
\begin{equation}   \label{spaceXdef}
\mathcal{X} := \{ g \in H^2 (\Omega) : g \ \text{independent of} \ \varphi, \ \exp (i \varphi) g \in H^2 (\Omega) , \ g|_{\partial \Omega} =0    \}  \subset H^{2 \dagger} (\Omega)  .  
\end{equation}

In the rest of the paper, $A\lesssim B$ ($A\gtrsim B$) means $A \leq CB$ ($A \geq CB$) for some constant $C>0$ independent of $K$, where $K$ is the parameter in the parametrized family of equilibria $\{(\mu^{K, \pm}, \phi^{K, 0}, A_\varphi^{K, 0})\}$, see Section \ref{S:global-cont-I}.

\section{Global continuation} 
\label{S:global-cont-I}

In this section, we consider the system \eqref{system01} -- \eqref{system02}. Let us consider a parametrized family of functions $\{ \mu^{K, \pm} (e, p) \}_{K \in [0, +\infty)}$, which satisfy
\begin{assumption} \label{Ass: mu-general-1}
We assume the following:  \\
1) $\mu^{K, \pm} (e, p)$ are non-negative $C^1$ functions, and
\begin{equation} \label{decayassumption}
\begin{split}
& |\mu^{K, \pm}  (e , p )|+ |\mu^{K, \pm}_p (e , p )| +|\mu^{K, \pm}_e (e, p)| \leq \frac{C_\mu (K) }{1+ |e|^\delta }, \ \delta > 3 . \\
\end{split}
\end{equation}
2) $\mu^{K, \pm} (e, p)$ depend on $K$ in the $C^1$ sense, i.e. $\frac{\partial \mu^{K, \pm} (e, p)}{\partial K} \in C^0$. Moreover, $\frac{\partial \mu^{K, \pm} (e, p)}{\partial K} |_{K=0} = 0$.  \\
3) $\mu^{0, +} (e, p) = \mu^{0, -} (e, p) = M^{0} (e, p)$ for some non-negative function $M^{0} (e, p)$ satisfying the decay assumption \eqref{decayassumption}.  \\
4) The matrix operator 
\begin{equation}
\left( \begin{array}{ccc}
\tilde{J}_{11} \ \tilde{J}_{12} \\
\tilde{J}_{21} \ \tilde{J}_{22} \\
\end{array} \right)
: \mathcal{X} \times \mathcal{X}   \rightarrow H^2(\Omega) \times H^2(\Omega) 
\end{equation}
is bounded with a bounded inverse. Here
\begin{equation}
\begin{split}
& \tilde{J}_{11} (\delta u)   = \delta u -    ( -\Delta + \frac{1}{r^2})^{-1} \big\{ f_1 (r) \delta u  \big\} , \\
& \tilde{J}_{12} (\delta w)   = -   ( -\Delta + \frac{1}{r^2})^{-1} \big\{   f_2 (r)  \delta w   \big\} ,   \\
& \tilde{J}_{21} (\delta u)   =      \Delta^{-1} \big\{  f_3 (r) \delta u   \big\}  , \\
& \tilde{J}_{22} (\delta w)   = \delta w +   \Delta^{-1} \big\{  f_4 (r) \delta w   \big\} , \\
\end{split}  
\end{equation}
with 
\begin{equation*}
\begin{split}
& f_1 (r) := 2 \int_{\mathbb{R}^3} r \hat{v}_\varphi M^0_p (\la v \ra , r v_\varphi  )  dv , 
\
f_2 (r) := 2 \int_{\mathbb{R}^3} \hat{v}_\varphi M^0_e (\la v \ra , r v_\varphi  )  dv , \\
& f_3 (r) := 2 \int_{\mathbb{R}^3} r M^0_p (\la v \ra , r v_\varphi  )  dv , \
f_4 (r) := 2 \int_{\mathbb{R}^3} M^0_e (\la v \ra , r v_\varphi  )  dv . \\
\end{split}
\end{equation*}
\end{assumption}
By \eqref{system01} -- \eqref{system02}, we aim to solve the elliptic system
\begin{equation} 
\label{system01-nonneutral-arbimass}
-\Delta \phi^{K, 0} = \int_{\mathbb{R}^3} (\mu^{K, +} (e^{K, +}, p^{K, +}) - \mu^{K, -} (e^{K, -}, p^{K, -})) dv   , 
\end{equation}
\begin{equation} 
\label{system02-nonneutral-arbimass}
( -\Delta + \frac{1}{r^2}) A_\varphi^{K, 0}  = \int_{\mathbb{R}^3} \hat{v}_\varphi (\mu^{K, +} (e^{K, +}, p^{K, +}) - \mu^{K, -} (e^{K, -}, p^{K, -})) dv  ,
\end{equation}
where
$$ e^{K, \pm} = \la v \ra \pm \phi^{K, 0}(x) , \  p^{K, \pm} =r ( v_\varphi \pm A_\varphi^{K , 0}(x) )  . $$
It is obvious that $(\phi^{K, 0},  A_\varphi^{K, 0} , K) = (0, 0, 0)$ is a solution to \eqref{system01-nonneutral-arbimass} -- \eqref{system02-nonneutral-arbimass}. Starting from this trivial solution, we will construct a global solution set 
$$\{ ( \mu^{K, \pm} (e^{K, \pm} (x,v), p^{K, \pm} (x,v)  ) ,  \phi^{K, 0} (x),  A^{K, 0}_\varphi (x) ) \}_{K \in [0, +\infty)} $$ 
for \eqref{system01-nonneutral-arbimass} - \eqref{system02-nonneutral-arbimass}, therefore obtain a global solution curve for the time-independent RVM. 


We use the Global Implicit Function Theorem \ref{globalimplicitfunctiontheorem} to obtain

\begin{theorem}  \label{nonneutralglobalthm-general-1}
Let $\Omega$ be a $C^1$ axisymmetric bounded domain as stated in Section \ref{S:Setup}. Let $\mu^{K, \pm} (e, p)$ satisfy Assumption \ref{Ass: mu-general-1}. Then there exists an unbounded continuous solution set or loop $\mathcal{C} := \{ ( A_\varphi^{K, 0} , \phi^{K, 0}, K) \} \subset H^2(\Omega) \times H^2(\Omega) \times \mathbb{R}$ to the system \eqref{system01-nonneutral-arbimass} -- \eqref{system02-nonneutral-arbimass} with $e^{K, \pm} = \la v \ra \pm \phi^{K, 0}(x) , \  p^{K, \pm} =r ( v_\varphi \pm A_\varphi^{K , 0}(x) ) $, in which the solution $(0, 0, 0)$ is included. 
\end{theorem}

\begin{proof}
We extend the definition of $\mu^{K, \pm} (e, p)$ evenly with respect to $K$ by taking $\mu^{-K, \pm} (e, p) = \mu^{K, \pm} (e, p)$ for all $K \geq 0$. Recall
$$ \mathcal{X} := \{ g \in H^2 (\Omega) : g \ \text{independent of} \ \varphi, \ \exp (i \varphi) g \in H^2 (\Omega) , \ g|_{\partial \Omega} =0    \}  . $$
For any $h \in L^2 (\Omega)$, define $\Delta^{-1} h \in H^2 (\Omega) \cap H^1_0 (\Omega)$ to be the unique solution $g$ to the elliptic problem with Dirichlet boundary condition
\begin{equation}
\Delta g = h   , \ g|_{\partial \Omega} =0  .  
\end{equation}
Then $\Delta^{-1}$ is a compact operator from $L^2 (\Omega)$ to $L^2 (\Omega)$ since the inclusion of $H^1 (\Omega)$ into $L^2 (\Omega)$ is compact. Also, for any $h \in  L^2 (\Omega)$, we define $( -\Delta + \frac{1}{r^2})^{-1} h \in H^2 (\Omega) \cap H^1_0 (\Omega)$ to be the unique solution $g$ to the elliptic problem with Dirichlet boundary condition
\begin{equation}
( -\Delta + \frac{1}{r^2})  g = h   , \ g|_{\partial \Omega} =0  .  
\end{equation}
To do this, we first solve the equation
\begin{equation*}
  -\Delta \tilde{g} = h e^{i\varphi}  , \ \tilde{g}|_{\partial \Omega} =0  ,
\end{equation*}
then take 
\begin{equation*}
( -\Delta + \frac{1}{r^2})^{-1} h  =g :=e^{-i\varphi} \tilde{g} .
\end{equation*}
Now $( -\Delta + \frac{1}{r^2})^{-1}$ is a compact operator from $L^2 (\Omega)$ to $L^2 (\Omega)$ since the inclusion of $H^1 (\Omega)$ into $L^2 (\Omega)$ is compact.

 
We take $X =  L^2_0 (\Omega) \times L^2_0 (\Omega)$, $Z =  H^2_0 (\Omega) \times H^2_0 (\Omega)$ and $U = \mathcal{X} \times \mathcal{X} \times \mathbb{R}$. Let 
\begin{equation}  \label{E:nonneutralglobalthm-general-1-eq1}
\begin{split}
& \begin{array}{ccc}
G(u, w, K) : = 
 \end{array}  
 \\
& \left( \begin{array}{ccc}
 u -  ( -\Delta + \frac{1}{r^2})^{-1} \big\{ \int_{\mathbb{R}^3} \hat{v}_\varphi (\mu^{K, +} (\la v \ra + w, r ( v_\varphi + u )) - \mu^{K, -} (\la v \ra - w, r ( v_\varphi - u ))) dv \big\} \\
 w +    \Delta^{-1} \big\{ \int_{\mathbb{R}^3}  (\mu^{K, +} (\la v \ra + w, r ( v_\varphi + u )) - \mu^{K, -} (\la v \ra - w, r ( v_\varphi - u ))) dv \big\}  
\end{array} \right)
\begin{array}{ccc}
  . \end{array} 
\end{split}
\end{equation}
Then $G$ is a mapping from $X \times \mathbb{R}$ to $Z$. Solving the system \eqref{system01-nonneutral-arbimass} -- \eqref{system02-nonneutral-arbimass} is equivalent to solving the equation
\begin{equation}
G(u, w, K) =0  .
\end{equation}
For each solution $(u,w, K)$ (if exists), $(u, w)$ is automatically in $(H^2 (\Omega) \cap H^1_0 (\Omega))^2$.

Consider the point $(u_0, w_0, K_0) = (0, 0, 0)$ in $X \times \mathbb{R}$. One can verify that $G(u_0, w_0, K_0) =0$. We have: $G$ is a $C^1$ mapping form $X \times \mathbb{R}$ to $Z$, with 
\begin{equation}
 \begin{array}{ccc}
(D_{(u, w)} G) (u, w , K)= 
 \end{array}  
\left( \begin{array}{ccc}
J_{11} \ J_{12} \\
J_{21} \ J_{22} \\
\end{array} \right)
\begin{array}{ccc}
  , \end{array} 
\end{equation}
where 
\begin{equation}
\begin{split}
J_{11} (\delta u) 
& = \delta u -   ( -\Delta + \frac{1}{r^2})^{-1} \big\{ \int_{\mathbb{R}^3} r \hat{v}_\varphi (\mu^{K, +}_p (\la v \ra + w, r ( v_\varphi + u )) + \mu^{K, -}_p (\la v \ra - w, r ( v_\varphi - u ))) \delta u dv \big\} , \\
\end{split}  
\end{equation}
\begin{equation}
\begin{split}
J_{12} (\delta w) 
& = -   ( -\Delta + \frac{1}{r^2})^{-1} \big\{ \int_{\mathbb{R}^3} \hat{v}_\varphi (\mu^{K, +}_e (\la v \ra + w, r ( v_\varphi + u )) + \mu^{K, -}_e (\la v \ra - w, r ( v_\varphi - u ))) \delta w dv \big\} , \\
\end{split}
\end{equation}
\begin{equation}
\begin{split}
J_{21} (\delta u) 
& =    \Delta^{-1} \big\{ \int_{\mathbb{R}^3} r (\mu^{K, +}_p (\la v \ra + w, r ( v_\varphi + u )) + \mu^{K, -}_p (\la v \ra - w, r ( v_\varphi - u ))) \delta u dv  \big\} , \\
\end{split}
\end{equation}
\begin{equation}
\begin{split}
J_{22} (\delta w) 
& = \delta w +  \Delta^{-1} \big\{ \int_{\mathbb{R}^3}  (\mu^{K, +}_e (\la v \ra + w, r ( v_\varphi + u )) + \mu^{K, -}_e (\la v \ra - w, r ( v_\varphi - u ))) \delta w dv \big\} . \\
\end{split}
\end{equation}
Hence we have
\begin{equation}
 \begin{array}{ccc}
(D_{(u, w)} G) (u_0, w_0 , K_0)= 
 \end{array}  
\left( \begin{array}{ccc}
\tilde{J}_{11} \ \tilde{J}_{12} \\
\tilde{J}_{21} \ \tilde{J}_{22} \\
\end{array} \right)
,
\end{equation}
where
\begin{equation}
\begin{split}
\tilde{J}_{11} (\delta u) 
& = \delta u - 2   ( -\Delta + \frac{1}{r^2})^{-1} \big\{ \int_{\mathbb{R}^3} r \hat{v}_\varphi M^0_p (\la v \ra , r v_\varphi  )  \delta u dv \big\}  \\
& = \delta u -    ( -\Delta + \frac{1}{r^2})^{-1} \big\{ f_1 (r) \delta u  \big\} , \\
\end{split}  
\end{equation}
\begin{equation}
\begin{split}
\tilde{J}_{12} (\delta w) 
& = - 2  ( -\Delta + \frac{1}{r^2})^{-1} \big\{ \int_{\mathbb{R}^3} \hat{v}_\varphi  M^0_e (\la v \ra  , r   v_\varphi )   \delta w dv \big\}   \\
& = -   ( -\Delta + \frac{1}{r^2})^{-1} \big\{   f_2 (r)  \delta w   \big\}   \\
\end{split}
\end{equation}
\begin{equation}
\begin{split}
\tilde{J}_{21} (\delta u) 
& =  2   \Delta^{-1} \big\{ \int_{\mathbb{R}^3} r  M^0_p (\la v \ra , r  v_\varphi  )   \delta u dv  \big\}   \\
& =    \Delta^{-1} \big\{  f_3 (r) \delta u   \big\}  , \\
\end{split}
\end{equation}
\begin{equation}
\begin{split}
\tilde{J}_{22} (\delta w) 
& = \delta w + 2   \Delta^{-1} \big\{ \int_{\mathbb{R}^3} M^0_e (\la v \ra , r v_\varphi ) \delta w dv \big\}  \\
& = \delta w +   \Delta^{-1} \big\{  f_4 (r) \delta w   \big\} . \\
\end{split}
\end{equation}
Here
\begin{equation*}
\begin{split}
& f_1 (r) := 2 \int_{\mathbb{R}^3} r \hat{v}_\varphi M^0_p (\la v \ra , r v_\varphi  )  dv , 
\
f_2 (r) := 2 \int_{\mathbb{R}^3} \hat{v}_\varphi M^0_e (\la v \ra , r v_\varphi  )  dv , \\
& f_3 (r) := 2 \int_{\mathbb{R}^3} r M^0_p (\la v \ra , r v_\varphi  )  dv , \
f_4 (r) := 2 \int_{\mathbb{R}^3} M^0_e (\la v \ra , r v_\varphi  )  dv . \\
\end{split}
\end{equation*}



By Assumption \ref{Ass: mu-general-1}, the operator $(D_{(u, w)} G) (u_0, w_0, K_0) = (D_{(u, w)} G) (0, 0, 0)  : \mathcal{X} \times \mathcal{X} \rightarrow H^2(\Omega) \times H^2(\Omega)$ is bounded with a bounded inverse. Moreover, we have 
\begin{equation}
\begin{split}
& \quad (u, w, K) \mapsto G(u, w, K) - (u, w)  = \\
& \left( \begin{array}{ccc}
  -   ( -\Delta + \frac{1}{r^2})^{-1} \big\{ \int_{\mathbb{R}^3} \hat{v}_\varphi (\mu^{K, +} (\la v \ra + w, r ( v_\varphi + u )) - \mu^{K, -} (\la v \ra - w, r ( v_\varphi - u ))) dv \big\} \\
     \Delta^{-1} \big\{ \int_{\mathbb{R}^3}  (\mu^{K, +} (\la v \ra + w, r ( v_\varphi + u )) - \mu^{K, -} (\la v \ra - w, r ( v_\varphi - u ))) dv \big\}  
\end{array} \right)
\begin{array}{ccc}
  . \end{array}
\end{split} 
\end{equation}
is a compact operator from $X \times \mathbb{R}$ to $Z = X$. We can now apply the Global Implicit Function Theorem \ref{globalimplicitfunctiontheorem} and obtain a solution continuum $\mathcal{C}$ for the equation $G(u, w, K) =0$, which is a solution continuum for the system \eqref{system01-nonneutral-arbimass} -- \eqref{system02-nonneutral-arbimass}. One of the three alternatives must hold. The third alternative does not hold since $X \times \mathbb{R} = U$. The proof is complete.

\end{proof}

In the single species case, under additional assumptions, we use the Analytic Global Implicit Function Theorem \ref{analytic-globalimplicitfunctiontheorem} to obtain:

\begin{theorem}  \label{nonneutralglobalthm-general-analytic-1}
Let $\Omega$ be a $C^1$ axisymmetric bounded domain as stated in Section \ref{S:Setup}. Under the assumptions of Theorem \ref{nonneutralglobalthm-general-1}, consider the (strict) single species case, that is, one of the following two cases hold  \\
1) $\mu^{0, +} = \mu^{K, -}  \equiv 0$ for all $K \geq 0$, and $\mu^{K, +}  > 0$ for all $K>0$;   \\
2) $\mu^{0, -} = \mu^{K, +}  \equiv 0$ for all $K \geq 0$, and $\mu^{K, -}  > 0$ for all $K>0$.   \\
Let $\mu^{K, \pm}$ satisfies that $\mu^{K, \pm}$ is real-analytic with respect to $K$, and $\int_{\mathbb{R}^3}   \mu^{K, \pm} (\la v \ra + a, r(v_\varphi +b)) dv$, $\int_{\mathbb{R}^3} \hat{v}_\varphi   \mu^{K, \pm} (\la v \ra + a, r(v_\varphi +b)) dv$ are real-analytic with respect to $a$ and $b$, i.e. 
\begin{equation} \label{Ass:analytic}
\begin{split}
& \big| \int_{\mathbb{R}^3} \partial_1^k \partial_2^l  \mu^{K, \pm} (\la v \ra + a, r(v_\varphi +b)) dv  \big| \leq C^{k+l} \ k! \ l! , \\
& \big| \int_{\mathbb{R}^3} \hat{v}_\varphi \partial_1^k \partial_2^l  \mu^{K, \pm} (\la v \ra + a, r(v_\varphi +b)) dv \big| \leq C^{k+l} \  k! \ l! , \\
\end{split}
\end{equation}
for some $C=C(a, b, K)>0$ and all $k$, $l \in \mathbb{Z}_{\geq 0}$, $a$, $b \in \mathbb{R}$, then the solution set $\mathcal{C}$ contains a locally analytic curve $\widetilde{\mathcal{C}}$ parametrized by $s$, with the following holds: 
There exists a sequence $s_j \rightarrow +\infty$, such that
\begin{equation}  \label{E:nonneutralglobalthm-general-1-eq0}
 \|( A_\varphi^{K(s_j), 0} , \phi^{K(s_j), 0})\|_{H^2 \times H^2} + K(s_j)  \rightarrow +\infty  \ \text{as} \ j \rightarrow + \infty . 
\end{equation}
Hence $\widetilde{\mathcal{C}}$ is an unbounded solution curve. 
\end{theorem}

\begin{remark}
Under the assumptions of Theorem \ref{nonneutralglobalthm-general-analytic-1}, the result above implies that the set $\mathcal{C}$ obtained in Theorem \ref{nonneutralglobalthm-general-1} is unbounded. 
\end{remark}

\begin{remark}
If $\mu^{K, \pm} $ satisfy
\begin{equation}  
\begin{split}
& \partial_e^k \partial_p^l \mu^{K, \pm} (e, p) \leq  \frac{C}{1+|e|^\delta } \ k!  \ l!  \\
\end{split}
\end{equation}
for all $k$, $l \in \mathbb{Z}_{\geq 0}$ and some $C= C(K)>0$, then \eqref{Ass:analytic} holds. 
\end{remark}



\begin{proof}
As in the proof of Theorem \ref{nonneutralglobalthm-general-1}, we extend the definition of $\mu^{K, \pm} (e, p)$ evenly with respect to $K$ by taking $\mu^{-K, \pm} (e, p) = \mu^{K, \pm} (e, p)$ for all $K \geq 0$. Following the same argument in the proof for Theorem \ref{nonneutralglobalthm-general-1}, we can apply the Local Implicit Function Theorem \ref{localimplicitfunctiontheorem} and obtain a continuous local solution curve $\mathcal{C}_{loc}$. 

We consider the single species case with $\mu^{K, -} \equiv 0$ and let $I = \mathbb{R}$, $U = \{ (u, w) \in \mathcal{X} \times \mathcal{X} : w > 0 \text{ in } \Omega \}$, then $(u_0, w_0) = (0, 0) \in \partial U$ (here we use the single species assumption). If $\mu^{K, \pm}$ satisfies \eqref{Ass:analytic} for some $C= C(a, b, K)>0$ and all $k$, $l \in \mathbb{Z}_{\geq 0}$ and $a$, $b \in \mathbb{R}$, then 
\begin{equation}  
\begin{split}
& \big|  \partial_a^k \partial_b^l \int_{\mathbb{R}^3}   \mu^{K, \pm} (\la v \ra + a, r(v_\varphi +b)) dv \big| = \big|  r^l  \int_{\mathbb{R}^3} \partial_1^k \partial_2^l  \mu^{K, \pm} (\la v \ra + a, r(v_\varphi +b)) dv \big|  \leq C^{k+l} \ k! \ l! , \\
& \big|  \partial_a^k \partial_b^l  \int_{\mathbb{R}^3} \hat{v}_\varphi   \mu^{K, \pm} (\la v \ra + a, r(v_\varphi +b)) dv \big| = \big| \int_{\mathbb{R}^3} \hat{v}_\varphi \partial_1^k \partial_2^l  \mu^{K, \pm} (\la v \ra + a, r(v_\varphi +b)) dv \big|   \leq C^{k+l} \  k! \ l! . \\
\end{split}
\end{equation}
Hence $G$ is real-analytic in the Fr\'{e}chet sense. Also, the Fr\'{e}chet derivative $D_{(u, w)} G (u, w, K): X \rightarrow Z$ is Fredholm with index $0$ since the operators $( -\Delta + \frac{1}{r^2})^{-1} $ and $\Delta^{-1} $ are compact. Moreover, by Lemma \ref{lm-RS1} we have that $D_{(u, w)} G (u, w, K): X \rightarrow Z$ is invertible for the all the $(u, w)$'s in the local solution curve $\mathcal{C}_{loc}$. We apply the Analytic Global Implicit Function Theorem \ref{analytic-globalimplicitfunctiontheorem} to obtain that the solution set $\mathcal{C}$ contains a locally analytic curve $\widetilde{\mathcal{C}}$. 


We rule out possibility ii) in Theorem \ref{analytic-globalimplicitfunctiontheorem}. Let $\{ s_n \}$ be a sequence such that $s_n \rightarrow +\infty$ such that $\sup_n N(s_n) < +\infty$, then 
\begin{equation}
\begin{split}
& \left( \begin{array}{ccc}
\big\{ \int_{\mathbb{R}^3} \hat{v}_\varphi (\mu^{K, +} (\la v \ra + w, r ( v_\varphi + u )) - \mu^{K, -} (\la v \ra - w, r ( v_\varphi - u ))) dv \big\} \\
 \big\{ \int_{\mathbb{R}^3}  (\mu^{K, +} (\la v \ra + w, r ( v_\varphi + u )) - \mu^{K, -} (\la v \ra - w, r ( v_\varphi - u ))) dv \big\}  
\end{array} \right)
\begin{array}{ccc}
\end{array} 
\end{split}
\end{equation}
must be bounded in $L^2$. Since the operators $( -\Delta + \frac{1}{r^2})^{-1} $ and $\Delta^{-1} $ are compact, and $(u(s_n), w(s_n))$ satisfies the equation $G(u, w, K) =0$ with $G$ described in \eqref{E:nonneutralglobalthm-general-1-eq1}, there must exist a subsequence of $(u(s_n), w(s_n))$ that converges in $X$. Hence possibility ii) in Theorem \ref{analytic-globalimplicitfunctiontheorem} cannot hold, and we must have
\begin{equation*}
N(s) = \|(u(s), w(s))\|_X + K(s) + \frac{1}{dist ((u(s), w(s)), \partial U)} + \frac{1}{dist (K(s), \partial I)} \rightarrow +\infty 
\end{equation*}
as $s \rightarrow +\infty$. It is not possible that $\frac{1}{dist (K(s), \partial I)} \rightarrow +\infty $ since $I = \mathbb{R}$. Hence there must hold
\begin{equation*}
 \|(u(s), w(s))\|_{H^2 \times H^2} + K(s) + \frac{1}{dist ((u(s), w(s)), \partial U)} \rightarrow +\infty 
\end{equation*}
as $s \rightarrow +\infty$. 

If $\|(u(s), w(s))\|_{H^2 \times H^2} + K(s) \rightarrow +\infty$ as $s \rightarrow +\infty$, then the analytic curve $\widetilde{\mathcal{C}}$ is unbounded. On the other hand,  if $\|(u(s), w(s))\|_{H^2 \times H^2} + K(s)$ stays bounded as $s \rightarrow +\infty$. Consider any sequence $\{ s_j \}_{j=1}^\infty$ with $s_j \rightarrow +\infty$ as $j \rightarrow \infty$. By compactness, there exists a subsequence still denoted by $\{ s_j \}_{j=1}^\infty$, such that
\begin{equation}
(u(s_j), w(s_j), K(s_j)) \rightarrow  (u_1, w_1,  K_1) \text{ in } H^2 \times H^2 \times \mathbb{R}  \text{ for some }  (u_1, w_1,  K_1) . 
\end{equation}
Moreover, we have $w_1 \in \partial U$, therefore there exists some $x_0 \in \Omega$, such that $w_1(x_0) = 0$, and $w_1 \geq 0$. By strong maximum principle, we have $w_1 \equiv 0$. Plugging this into \eqref{system01-nonneutral-arbimass} -- \eqref{system02-nonneutral-arbimass}, and noticing that $\mu^{0, +}  = \mu^{K, -} \equiv 0$, $\mu^{K, +} > 0$ for all $K > 0$. Hence we must have $K_1 =0$, and therefore $u_1 \equiv 0$. By basis analysis fact, we have
\begin{equation}
\lim_{s \rightarrow +\infty} (u(s), w(s), K(s)) = (0,  0 ,  0) .    
\end{equation}
By the uniqueness in the Local Implicit Function Theorem \ref{localimplicitfunctiontheorem}, the curve $\widetilde{\mathcal{C}}$ must rejoin itself at some $s \in (0, +\infty)$, which violates Theorem \ref{analytic-globalimplicitfunctiontheorem} 3). A contradiction. Hence $\|(u(s), w(s))\|_{H^2 \times H^2} + K(s)$ cannot stay bounded when $s \rightarrow +\infty$. Combining these arguments together, we conclude that there exists a sequence $s_j \rightarrow +\infty$, such that 
\begin{equation} 
 \|( A_\varphi^{K(s_j), 0} , \phi^{K(s_j), 0})\|_{H^2 \times H^2} + K(s_j)  \rightarrow +\infty  \ \text{as} \ j \rightarrow + \infty . 
\end{equation}
holds.

Similar argument as above can be carried out for the other single species case $\mu^{K, +} \equiv 0$. The proof is complete.
\end{proof}

\section{Properties of the solution set}
\label{S:prop-sol-general}


In this section, we consider the case when the set $\mathcal{C}$ obtained in Theorem \ref{nonneutralglobalthm-general-1} is unbounded, and obtain some properties of the solution set. Throughout the section, we assume: 
\begin{assumption} \label{prop-sol-assump}
For all $(e, p) \in \mathbb{R}^2$,
\begin{equation}  \label{mu-assump-general}
\begin{split}
&   \mu^{K, +} (e, p) +  \mu^{K, -} (e, p) \leq m (K) (1 + |e|^\beta  )^{-1} , \\
&  \int_{\mathbb{R}^3} [ \mu^{K, +} (\la v \ra +a, r(v_\varphi +b)) -  \mu^{K, -} (\la v \ra -a, r(v_\varphi -b))]  dv \geq N(a, b, K) .  \\
\end{split}
\end{equation}
Here $m (K)$ and $N(a, b, K)$ are positive functions, and the constant $\beta>3$. $m (K)$ satisfies $m (0) =0$, $m (K) \rightarrow +\infty$ as $K \rightarrow +\infty $. $N(a, b, K) $ is some function satisfies that when $K$ is large enough, $N(a, b, K) $ is positive and monotonically decreasing in $a$ and $b$. Moreover, we assume that for each $(a, b) \in \mathbb{R}^2$,
\begin{equation} \label{N-assump}
\begin{split}
& N (a, b, K) \rightarrow +\infty \text{ when } K \rightarrow +\infty .  \\
\end{split}
\end{equation}
\end{assumption}

We will show the following
\begin{proposition} \label{P:prop-sol-general}
Let $\Omega$ be a $C^1$ axisymmetric bounded domain as stated in Section \ref{S:Setup}. Assume that the set $\mathcal{C}$ obtained in Theorem \ref{nonneutralglobalthm-general-1} is unbounded. Assume that Assumption \ref{prop-sol-assump} holds. 
Then we have  \\
1) 
\begin{equation}
\begin{split}
& \sup_{ \{ ( A_\varphi^{K, 0} , \phi^{K, 0}, K) \} \in \mathcal{C} }  \| (\phi^{K, 0}, A^{K, 0}_\varphi)\|_{H^2 (\Omega)} = +\infty .  \\
\end{split}
\end{equation}
2) In particular, in the case that $\sup_{ \{ ( A_\varphi^{K, 0} , \phi^{K, 0}, K) \} \in \mathcal{C} } K = +\infty $ below holds, we have:
\begin{equation}
\text{As $K \rightarrow \infty$, } \|(\phi^{K, 0}, A^{K, 0}_\varphi) \|_{L^\infty (\Omega)} \rightarrow +\infty .
\end{equation}
3) If $\mu^{K, -} \equiv 0$, then 
\begin{equation} \label{E:lambdaunbdd-general}
\sup_{ \{ ( A_\varphi^{K, 0} , \phi^{K, 0}, K) \} \in \mathcal{C} } K = +\infty .
\end{equation}
\end{proposition}


\begin{remark}
We can switch the role of ions and electrons ($+$ and $-$), and the same results hold. 
\end{remark}

\begin{remark}
In general, without the single species assumption, it is not certain if \eqref{E:lambdaunbdd-general} holds, that is, it is not clear if the parameter $K$ stays bounded in the set $\mathcal{C}$. 
\end{remark}

The proof of Proposition \ref{P:prop-sol-general} is processed by showing the following lemmas below: Lemma \ref{L:field-H2-unbounded-general}, Lemma \ref{L:phiAvarphiunbdd-general}, Lemma \ref{L:field-K-unbounded-general}.





\begin{lemma}  \label{L:field-H2-unbounded-general}
Assume that Assumption \ref{prop-sol-assump} holds. Then we have
\begin{equation}  \label{E:field-H2-unbounded-general-eq0-1}
\begin{split}
& \sup_{ \{ ( A_\varphi^{K, 0} , \phi^{K, 0}, K) \} \in \mathcal{C} }  \| (\phi^{K, 0}, A^{K, 0}_\varphi)\|_{H^2 (\Omega)} = +\infty .  \\
\end{split}
\end{equation}
\end{lemma}

\begin{proof}
Assume $\sup_{ \{ ( A_\varphi^{K, 0} , \phi^{K, 0}, K) \} \in \mathcal{C} } K = +\infty$. We are going to show 
$$\sup_{ \{ ( A_\varphi^{K, 0} , \phi^{K, 0}, K) \} \in \mathcal{C} }  \| (\phi^{K, 0}, A^{K, 0}_\varphi)\|_{H^2 (\Omega)} = +\infty $$ 
by arguing by contradiction. Assume the contrary, then there must hold
$$\sup_{ \{ ( A_\varphi^{K, 0} , \phi^{K, 0}, K ) \} \in \mathcal{C} }  \| (\phi^{K, 0}, A^{K, 0}_\varphi)\|_{L^\infty (\Omega)} \leq c(K) \leq C_0 $$ 
for some constant $C_0 >0$ independent of $K$. By the assumption \eqref{mu-assump-general} of Proposition \ref{P:prop-sol-general}, we then have
\begin{equation}
\begin{split}
& | \int_{\mathbb{R}^3} [  \mu^{K, +} (\la v \ra +  \phi^{K, 0}(x), r(v_\varphi + A^{K, 0}_\varphi (x))) - \mu^{K, -} (\la v \ra -  \phi^{K, 0}(x), r(v_\varphi - A^{K, 0}_\varphi (x))) ] dv |  \\
& \geq N (C_0, C_0, K)  \\
\end{split}
\end{equation}
for some positive continuous function $N (a, b, K) $, and $ N (C_0, C_0, K)  $ goes to $+\infty$ as $K \rightarrow +\infty$ for any fixed $C_0$.



For large $K$, we apply Lemma \ref{lm:ellipticlowerbound} in Appendix \ref{AppendixC} to \eqref{system01-nonneutral-arbimass} with Dirichlet boundary condition, taking $\alpha (x) \equiv 1 $, $\zeta  \equiv  N (C_0, C_0)  $. 
We obtain, for all $x \in \Omega$,
\begin{equation}
F(\phi^{K, 0} (x) ) \geq \frac{d_\Omega (x)^2}{6}  ,
\end{equation} 
with
\begin{equation*}
\begin{split}
F(\phi^{K, 0}(x))  
& =  \int^{\phi^{K, 0}(x)}_0  \frac{1}{N (C_0, C_0 ,  K )} ds .   \\
\end{split}
\end{equation*}
Hence
\begin{equation}  \label{E:field-H2-unbounded-general-eq1}
\begin{split}
& \frac{ \phi^{K, 0}(x) }{ N (C_0, C_0 ,  K ) }    \geq  \frac{  d_\Omega (x)^2}{6} .  \\
\end{split} 
\end{equation}

Let us pick some $x$ with $d_\Omega (x) >0$. Letting $K \rightarrow +\infty$, we arrive at a contradiction to the assumption $\lim_{K \rightarrow \infty}  N(C_0, C_0, K) = +\infty$. Therefore there must hold
\begin{equation}
\sup_{ \{ ( A_\varphi^{K, 0} , \phi^{K, 0}, K) \} \in \mathcal{C} } K < +\infty  
\end{equation}
or 
\begin{equation}
\sup_{ \{ ( A_\varphi^{K, 0} , \phi^{K, 0}, K) \} \in \mathcal{C} }  \| (\phi^{K, 0}, A^{K, 0}_\varphi)\|_{H^2 (\Omega)} = +\infty .
\end{equation}
In the first case, we must have
\begin{equation}
\begin{split}
& \sup_{ \{ ( A_\varphi^{K, 0} , \phi^{K, 0}, K) \} \in \mathcal{C} }  \| (\phi^{K, 0}, A^{K, 0}_\varphi)\|_{H^2 (\Omega)} = +\infty   \\ 
\end{split}
\end{equation}
since the set $\mathcal{C}$ is unbounded. Therefore in either case \eqref{E:field-H2-unbounded-general-eq0-1} holds. 
\end{proof}

%

\begin{lemma} \label{L:phiAvarphiunbdd-general}
Let Assumption \ref{prop-sol-assump} as well as the assumption in Proposition \ref{P:prop-sol-general} 2) hold. 
We have, as $K \rightarrow \infty$,
\begin{equation}
\|(\phi^{K, 0}, A^{K, 0}_\varphi) \|_{L^\infty} \rightarrow +\infty .
\end{equation}
\end{lemma}

\begin{proof}
Denote
$$ \sup_{ \{ ( A_\varphi^{K, 0} , \phi^{K, 0}, K) \} \in \mathcal{C} }  \|(\phi^{K, 0}, A^{K, 0}_\varphi) \|_{L^\infty} := c_\infty (K) . $$
Let $B_0$ be a maximum inscribed ball of $\Omega$ and $R$ be its radius, then $d_\Omega (x) =  R$. 
Let $B_1$ be the ball which has the same center as $B_0$ and half the radius as it, that is, the radius of $B_1$ is $\tfrac{1}{2} R$. 

In the same way as in Lemma \ref{L:field-H2-unbounded-general}, we have
\begin{equation}   \label{E:phiAvarphiunbdd-general-eq4}
\begin{split}
& \quad \text{The right hand side of \eqref{system01-nonneutral-arbimass}}  \geq  N (c_\infty (K), c_\infty (K), K) . \\
\end{split}
\end{equation}
Assume that $c_\infty (K)$ stays bounded when $K  \rightarrow +\infty$, i.e. $c_\infty (K) \leq C_\infty$ for some constant $C_\infty >0$ independent of $K$, then $ N (c_\infty (K), c_\infty (K), K) \geq N_0$ for some $N_0 >0$ when $K \rightarrow \infty$. 

For large $K$, we apply Lemma \ref{lm:ellipticlowerbound} in Appendix \ref{AppendixC} to $- \Delta \phi^{K, 0}(x) \geq \alpha (x) \zeta (u)$ with Dirichlet boundary condition and $\alpha (x) \equiv 1 $, $\zeta  \equiv  N (c_\infty (K), c_\infty (K), K)$. 
We obtain that for all $x \in \Omega$,
\begin{equation}
F(\phi^{K, 0} (x) ) \geq \frac{d_\Omega (x)^2}{6}  .
\end{equation} 
Here
\begin{equation*}
\begin{split}
& F(\phi^{K, 0}(x))  =   \int^{\phi^{K, 0}(x)}_0  N_0^{-1} ds  . \\
\end{split}
\end{equation*}
Hence
\begin{equation}  
\begin{split}
& \ |\phi^{K, 0}(x)| N_0^{-1}    \geq  \frac{d_\Omega (x)^2}{6} ,  \\
\end{split}
\end{equation} 
Let us pick some $x \in  B_1$. 
Taking $K \rightarrow +\infty$ yields a contradiction to our assumption. Hence there must hold 
$$ c_\infty (K) \rightarrow +\infty \text{ as }  K \rightarrow +\infty . $$
\end{proof}

\begin{lemma}  \label{L:field-K-unbounded-general}
Let Assumption \ref{prop-sol-assump} as well as the assumption in Proposition \ref{P:prop-sol-general} 3) hold. Then we have
\begin{equation}
\begin{split}
& \sup_{ \{ ( A_\varphi^{K, 0} , \phi^{K, 0}, K) \} \in \mathcal{C} } K = +\infty . \\
\end{split}
\end{equation}
\end{lemma}

\begin{proof}
We have $\mu^{K, +} \geq 0$, $\mu^{K, -} = 0$, hence the solution $\phi^{K, 0}$ to the elliptic system \eqref{system01-nonneutral-arbimass} -- \eqref{system02-nonneutral-arbimass} must be non-negative. Suppose $\sup_{ \{ ( A_\varphi^{K, 0} , \phi^{K, 0}, K) \} \in \mathcal{C} }  |K| < +\infty$. Then there must hold 
$$\sup_{ \{ ( A_\varphi^{K, 0} , \phi^{K, 0}, K) \} \in \mathcal{C} }  \|(\phi^{K, 0}, A^{K, 0}_\varphi)\|_{H^2 (\Omega)} = +\infty.  $$
We denote $\sup_{ \{ ( A_\varphi^{K, 0} , \phi^{K, 0}, K) \} \in \mathcal{C} }  K := \kappa$. 

By the standard elliptic estimates and that $\|g\|_{L^2 (\Omega)} \lesssim \|g \|_{L^\infty (\Omega)}$ (since $\Omega$ is bounded), as well as Lemma \ref{L:integralbound-0} below, we have 
\begin{equation*}
\begin{split}
& \quad \|(\phi^{K, 0}, A^{K, 0}_\varphi)\|_{H^2 (\Omega)}  \\
& \lesssim \| \int_{\mathbb{R}^3}   \mu^{K, +} (\la v \ra + \phi^{K, 0}(x), r(v_\varphi + A^{K, 0}_\varphi (x)))  dv \|_{L^2 (\Omega)} \\
& \lesssim \| \int_{\mathbb{R}^3}   \mu^{K, +} (\la v \ra + \phi^{K, 0}(x), r(v_\varphi + A^{K, 0}_\varphi (x)))  dv \|_{L^\infty (\Omega)} \\
& \lesssim  m ( \kappa)  \big \| \int_{\mathbb{R}^3}   \frac{1}{1 + |\la v \ra + \phi^{K, 0}(x)|^\delta  }   dv \big\|_{L^\infty (\Omega)} \\
& \leq  m ( \kappa) \big \| \int_{\mathbb{R}^3}  \frac{1}{1 + \la v \ra^\delta  }    dv \big\|_{L^\infty (\Omega)} \\
& \lesssim m ( \kappa) .  \\
\end{split}
\end{equation*}
Taking the supremum over $\mathcal{C}$ and using the assumption $\sup_{ \{ ( A_\varphi^{K, 0} , \phi^{K, 0}, K) \} \in \mathcal{C} }  K  < +\infty$, we arrive at a contradiction to $\sup_{ \{ ( A_\varphi^{K, 0} , \phi^{K, 0}, K) \} \in \mathcal{C} }  \|(\phi^{K, 0}, A^{K, 0}_\varphi)\|_{H^2 (\Omega)} = +\infty$. Hence there must hold
$$\sup_{ \{ ( A_\varphi^{K, 0} , \phi^{K, 0}, K) \} \in \mathcal{C} }  K = +\infty . $$

\end{proof}


\begin{lemma} \label{L:integralbound-0}
For each $x \in \Omega$, $\delta >3$, there holds
\begin{equation} \label{integralbound-0-eq1}
 \int_{\mathbb{R}^3}  
 \frac{1}{ 1+ |\la v \ra \pm \phi^{ 0} (x) |^\delta + |r( v_\varphi \pm A^0_\varphi (x))|^\delta} dv  \leq  (2+ 2^\delta | \phi^{ 0} (x) |^\delta  ) \int_{\mathbb{R}^3} \frac{1}{ 1+ \la v\ra^\delta} dv , 
\end{equation}
\begin{equation} \label{integralbound-0-eq2}
\begin{split}
& \int_{\mathbb{R}^3}   \frac{1}{ 1+ |\la v \ra \pm \phi^{ 0} (x) |^\delta + |r( v_\varphi \pm A^0_\varphi (x))|^\delta} dv  \geq   \\
&  \quad  \qquad  \frac{1}{2^\delta + 2^\delta | \phi^{ 0} (x) |^\delta + 2^\delta r^\delta | A^0_\varphi (x)|^\delta } \int_{\mathbb{R}^3} \frac{1}{ 1+ \la v\ra^\delta + r^\delta |v_\varphi|^\delta } dv .  \\
 \end{split}
\end{equation}
Here $C_\delta >0$ is a constant that only depends on $\delta$.
\end{lemma}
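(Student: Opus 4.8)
The plan is to prove the two inequalities by elementary pointwise manipulation of the integrand, reducing each to a comparison with a fixed convergent integral. Throughout, $x \in \Omega$ is fixed, so $\phi^0(x)$, $A^0_\varphi(x)$, and $r = r(x)$ are fixed real numbers. The key structural fact I would use repeatedly is the two-sided ``polynomial bound'': for any reals $s, t$ and any exponent $\delta \geq 1$,
\begin{equation*}
|s+t|^\delta \leq 2^{\delta-1}(|s|^\delta + |t|^\delta), \qquad |s|^\delta \leq 2^{\delta-1}(|s+t|^\delta + |t|^\delta),
\end{equation*}
which follows from convexity of $\tau \mapsto |\tau|^\delta$ (the first) and the first applied to $s = (s+t) + (-t)$ (the second). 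For brevity I'll absorb the harmless constant $2^{\delta-1}$ into $2^\delta$.

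For the upper bound \eqref{integralbound-0-eq1}: since the denominator is bounded below by $1 + |\la v\ra \pm \phi^0(x)|^\delta$ (dropping the nonnegative angular-momentum term only makes the fraction larger), it suffices to bound $\int_{\mathbb{R}^3} (1 + |\la v\ra \pm \phi^0(x)|^\delta)^{-1}\, dv$. Here I would use that $\la v\ra \geq 1 > 0$ and split on whether $|\phi^0(x)| \leq \tfrac12 \la v\ra$ or not. In the first regime $|\la v\ra \pm \phi^0(x)| \geq \tfrac12\la v\ra$, so the integrand is $\lesssim (1 + 2^{-\delta}\la v\ra^\delta)^{-1} \leq 2^\delta (1 + \la v\ra^\delta)^{-1}$, which contributes $\leq 2^\delta \int (1+\la v\ra^\delta)^{-1}\,dv$; in the second regime $\la v\ra \leq 2|\phi^0(x)|$, and I'd bound the integrand crudely by $1$ over a ball of radius $\lesssim |\phi^0(x)|$, giving a contribution $\lesssim |\phi^0(x)|^3 \leq |\phi^0(x)|^\delta$ (using $\delta > 3$ and, if needed, enlarging by a constant when $|\phi^0(x)|$ is small — alternatively keeping the statement's precise constants by a slightly more careful split). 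Collecting terms and comparing constants yields \eqref{integralbound-0-eq1}. The cleanest route to the \emph{exact} constants $2$ and $2^\delta$ in the statement is to write $1 + |\la v\ra \pm \phi^0(x)|^\delta \geq 2^{-\delta}(1 + \la v\ra^\delta)$ when that holds, and otherwise use the trivial bound, then note $(2 + 2^\delta|\phi^0(x)|^\delta)$ dominates both contributions; I would do the bookkeeping to match the displayed constant but won't grind it here.

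For the lower bound \eqref{integralbound-0-eq2}: this is the easier direction and is pure algebra inside the integral — no splitting needed. Apply the second polynomial inequality to $\la v\ra = (\la v\ra \pm \phi^0(x)) \mp \phi^0(x)$ and to $r v_\varphi = r(v_\varphi \pm A^0_\varphi(x)) \mp r A^0_\varphi(x)$, obtaining
\begin{equation*}
1 + \la v\ra^\delta + r^\delta|v_\varphi|^\delta \leq 1 + 2^\delta|\la v\ra \pm \phi^0(x)|^\delta + 2^\delta|\phi^0(x)|^\delta + 2^\delta|r(v_\varphi \pm A^0_\varphi(x))|^\delta + 2^\delta r^\delta|A^0_\varphi(x)|^\delta.
\end{equation*}
The right side is $\leq \big(2^\delta + 2^\delta|\phi^0(x)|^\delta + 2^\delta r^\delta|A^0_\varphi(x)|^\delta\big)\cdot\big(1 + |\la v\ra \pm \phi^0(x)|^\delta + |r(v_\varphi \pm A^0_\varphi(x))|^\delta\big)$ since the first factor is $\geq 1$ and each term on the right of the displayed inequality is dominated by a corresponding product term. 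Taking reciprocals reverses the inequality, and integrating over $v \in \mathbb{R}^3$ gives \eqref{integralbound-0-eq2} directly. I would remark that the finiteness of $\int_{\mathbb{R}^3}(1 + \la v\ra^\delta + r^\delta|v_\varphi|^\delta)^{-1}\,dv$ (and of $\int (1+\la v\ra^\delta)^{-1}\,dv$) follows from $\delta > 3$ since $\la v\ra \sim |v|$ for large $|v|$.

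The main obstacle is purely a matter of care rather than depth: getting the \emph{precise} constants in \eqref{integralbound-0-eq1} to come out as stated (the factor $2 + 2^\delta|\phi^0(x)|^\delta$ with no $r$-dependence) requires being disciplined about which terms to drop and in which regime — dropping the $|r(v_\varphi \pm A^0_\varphi)|^\delta$ term at the start is what removes the $r$-dependence, and then the $\phi^0$-split must be arranged so the two pieces sum to exactly that bound. The lower bound requires no such finesse. Finally I would note the stray reference to ``$C_\delta$'' in the statement is vestigial — the constants are fully explicit in both displays — or, if one prefers, one records $C_\delta := \int_{\mathbb{R}^3}(1+\la v\ra^\delta)^{-1}\,dv < \infty$.
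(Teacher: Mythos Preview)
Your approach for the upper bound \eqref{integralbound-0-eq1} is essentially the paper's: drop the angular-momentum term from the denominator and split according to whether $|\la v\ra \pm \phi^0|$ is large or small relative to $\tfrac12\la v\ra$. The paper carries this out as a purely pointwise comparison of denominators --- on the set $D_2 = \{|\la v\ra \pm \phi^0| \le \tfrac12\la v\ra\}$ one has $\la v\ra \le 2|\phi^0|$ and hence $1+\la v\ra^\delta \le 1+2^\delta|\phi^0|^\delta$, which immediately gives the stated factor without any ball-volume estimate. Your ``cleanest route'' aside is exactly this; your first pass via $|\phi^0|^3$ is unnecessary and, as you note, does not recover the constant.

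For the lower bound \eqref{integralbound-0-eq2}, however, you have the pointwise inequality running the wrong way. You prove
\[
1 + \la v\ra^\delta + r^\delta|v_\varphi|^\delta \;\le\; C\,\bigl(1 + |\la v\ra \pm \phi^0|^\delta + |r(v_\varphi \pm A^0_\varphi)|^\delta\bigr),
\]
with $C = 2^\delta + 2^\delta|\phi^0|^\delta + 2^\delta r^\delta|A^0_\varphi|^\delta$. Taking reciprocals and integrating then yields an \emph{upper} bound on the shifted integral, not the lower bound asserted in \eqref{integralbound-0-eq2}. What is actually needed (and what the paper proves as \eqref{integralbound-0-eq3}) is the inequality with the roles of shifted and unshifted reversed,
\[
1 + |\la v\ra \pm \phi^0|^\delta + |r(v_\varphi \pm A^0_\varphi)|^\delta \;\le\; C\,\bigl(1 + \la v\ra^\delta + r^\delta|v_\varphi|^\delta\bigr),
\]
obtained by applying your convexity bound to the \emph{shifted} terms: $|\la v\ra \pm \phi^0|^\delta \le 2^\delta(\la v\ra^\delta + |\phi^0|^\delta)$ and $|r(v_\varphi \pm A^0_\varphi)|^\delta \le 2^\delta(r^\delta|v_\varphi|^\delta + r^\delta|A^0_\varphi|^\delta)$. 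Once you swap the roles, the same product-domination argument goes through verbatim and delivers \eqref{integralbound-0-eq2} directly.
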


\begin{proof}
Firstly, let us prove \eqref{integralbound-0-eq1}. For any $x \in \Omega$, the integral $ \int_{\mathbb{R}^3} \frac{1}{ 1+ \la v\ra^\delta} dv$ is convergent because $\delta > 3$.
For any $x$, it suffices to prove 
\begin{equation}
1+ \la v \ra^\delta \leq (2+ 2^\delta | \phi^{0} (x) |^\delta  )  (1+ |\la v\ra \pm \phi^{ 0} (x) |^\delta  ) ,
\end{equation}
which implies
\begin{equation}
1+ \la v \ra^\delta \leq (2+ 2^\delta | \phi^{0} (x) |^\delta  )  (1+ |\la v\ra \pm \phi^{ 0} (x) |^\delta  + |r( v_\varphi \pm A^0_\varphi (x))|^\delta ) . 
\end{equation}
Let $D_1 : = \{ v \in \mathbb{R}^3 :  |\la v\ra \pm \phi^{0} (x) | > \frac{1}{2} \la v \ra  \}$ , $D_2 : =   \{ v \in \mathbb{R}^3 :  |\la v\ra \pm \phi^{0} (x) | \leq  \frac{1}{2} \la v \ra  \} $. We will show that the claim holds true on both sets. Indeed, on $D_1$,
\begin{equation}
1+ \la v \ra^\delta \leq 2 (1+ |\la v\ra \pm \phi^{0} (x) |^\delta )   \ .
\end{equation}
On $D_2$, note that $|\la v\ra \pm \phi^{0} (x) | \leq  \frac{1}{2} \la v \ra$ implies $1 \leq \la v \ra \leq  2 | \phi^{0}  (x) |$. Therefore 
\begin{equation}
1 + \la v \ra^\delta \leq 1+ 2^\delta | \phi^{0}  (x) |^\delta \ .
\end{equation}
\eqref{integralbound-0-eq1} is verified.

Next, we prove \eqref{integralbound-0-eq2}. It suffices to show
\begin{equation}  \label{integralbound-0-eq3}
  1+ |\la v \ra \pm \phi^{ 0} (x) |^\delta + |r( v_\varphi \pm A^0_\varphi (x))|^\delta  \leq  (2^\delta + 2^\delta | \phi^{ 0} (x) |^\delta + 2^\delta r^\delta | A^0_\varphi (x)|^\delta) ( 1+ \la v\ra^\delta + r^\delta |v_\varphi|^\delta )  .
\end{equation}
Indeed, the left hand side of the inequality \eqref{integralbound-0-eq3} satisfies
\begin{equation*} 
\begin{split}
1+ |\la v \ra \pm \phi^{ 0} (x) |^\delta + |r( v_\varphi \pm A^0_\varphi (x))|^\delta  
&  \leq  1+  2^\delta \la v \ra^\delta  +  2^\delta | \phi^{ 0} (x) |^\delta + 2^\delta |rv_\varphi |^\delta  + 2^\delta r^\delta |A^0_\varphi (x)|^\delta ,  \\
 \end{split} 
\end{equation*}
and the right hand side of \eqref{integralbound-0-eq3} satisfies
\begin{equation*} 
 (2^\delta + 2^\delta | \phi^{ 0} (x) |^\delta + 2^\delta r^\delta | A^0_\varphi (x)|^\delta) ( 1+ \la v\ra^\delta + r^\delta |v_\varphi|^\delta )  \geq  1+  2^\delta \la v \ra^\delta  +  2^\delta | \phi^{ 0} (x) |^\delta + 2^\delta |rv_\varphi |^\delta  + 2^\delta r^\delta |A^0_\varphi (x))|^\delta .
\end{equation*}
The two estimates above then gives \eqref{integralbound-0-eq3}, and hence \eqref{integralbound-0-eq2} is proved. The proof of the lemma is complete.

\end{proof}

\section{Spectral Stability in a Single Species Case}
\label{S:spectral-stability}


In this section, we consider a single species plasma and study the spectral stability for the solutions $(\phi^{K, 0}, A_\varphi^{K, 0})$ in the set $\mathcal{C}$. We assume  

\begin{assumption}  \label{Ass:stability-general}
\begin{equation}   \label{mucondition-general}
\begin{split}
&\text{For all } K ,   \quad  \mu^{K, -} \equiv 0 \quad \text{and} \quad \mu^{K, +}_e < 0 .  \\
& \text{When } K \text{ is large enough, the following inequalities hold:}  \\ 
& p \mu^{K, +}_p (e, p) \geq C'_\mu K^{1+m -\epsilon } |p| \la p  \ra^{-\epsilon} \nu (e) ,   \quad  \mu^{K, +} (e, p) \leq K^m \frac{C_\mu }{1+ |e|^\delta +K^\delta |p|^\delta }   ,  \\
& \mu^{K, +}_e (e, p) \leq K^m \frac{C_\mu }{1+ |e|^\delta +K^\delta |p|^\delta }  ,  \quad  \mu^{K, +}_p (e, p) \leq K^{1+m} \frac{C_\mu }{1+ |e|^\delta }  .  \\
\end{split}
\end{equation}
Here $\nu (e)$ is some positive function satisfying
\begin{equation}  \label{nucondition-general}
\nu (e) \geq C_\nu \exp (- e) 
\end{equation}
and the constants $\epsilon$, $\delta$, $m$ and $C_\nu$ satisfy $\epsilon>0$, $\delta >4$, $m \in (-1, 1)$, $C_\nu >0$, $ \epsilon <1  - |m|$ (hence $ \epsilon + m <1$, $ \epsilon <1 + m$).
\end{assumption}

Let us first state a theorem on spectral stability of equilibria in \cite{Z1}. Define
\begin{equation}
D^{K, \pm} = v \cdot \nabla_x \pm (\textbf{E}^{ K, 0} + \hat{v} \times \textbf{B}^{K, 0} ) \cdot \nabla_v  \ . 
\end{equation}
Here $\textbf{E}^{K, 0} = - \nabla \phi^{K, 0}$, $\textbf{B}^{K, 0} = \nabla \times \textbf{A}^{ K, 0} $, $\textbf{A}^{K, 0} = A^{K, 0}_\varphi e_\varphi $. 
Let $\mathcal{H}^{K, \pm}$ be the space of functions of $x \in \Omega$ and $v \in \mathbb{R}^3$ with the norm
\begin{equation}
\| g (x, v) \|_{\mathcal{H}^{K, \pm}} : = \big( \int_\Omega \int_{\mathbb{R}^3}  | ( \mu^{K, \pm})_e (e^\pm, p^\pm) | g^2 (x,v)   dx dv  \big)^{1/2}
\end{equation}
and $\mathcal{P}^{K, \pm} $ be the orthogonal projection from $\mathcal{H}^{K, \pm} $ onto $\ker D^{K, \pm} $. We define the operators
\begin{equation}
\mathcal{A}^{0, K}_1 h := \Delta h + \sum_\pm \int_{\mathbb{R}^3} ( \mu^{K, \pm} )_e (1- \mathcal{P}^{K, \pm}) h dv
\end{equation}
\begin{equation}
\mathcal{A}^{0, K}_2 h := (-\Delta + \frac{1}{r^2 }) h - \sum_\pm \int_{\mathbb{R}^3} \hat{v}_\varphi \big( (\mu^{ K, \pm} )_p r   h  + ( \mu^{K, \pm})_e  \mathcal{P}^{K, \pm} (\hat{v}_\varphi h )  \big) dv
\end{equation}
\begin{equation}   \label{B0Kdefinition}
\mathcal{B}^{0, K} h := - \sum_\pm \int_{\mathbb{R}^3}  \hat{v}_\varphi (\mu^{ K, \pm})_e (1-\mathcal{P}^{K, \pm}) h dv
\end{equation}
\begin{equation}
\mathcal{L}^{0, K} := \mathcal{A}^{0, K}_2 - \mathcal{B}^{0, K} (\mathcal{A}^{0, K}_1)^{-1} (\mathcal{B}^{0, K} )^*   . 
\end{equation}
Then we have the following result on spectral stability of equilibria from \cite{Z1}:
\begin{theorem} [\cite{Z1}] \label{mainresult}
Let $\Omega$ be a $C^1$ axisymmetric bounded domain as stated in Section \ref{S:Setup}. 
Let $(\mu^{K, \pm},\textbf{E}^{K,0}, \textbf{B}^{K,0})$ be an equilibrium of the relativistic Vlasov-Maxwell system satisfying $ \mu^{K, \pm} (e^{K, \pm}, p^{K, \pm}) \geq 0$ and $\mu^{K, \pm}_e (e, p) < 0$, $   |\mu^{K, \pm}_p| +|\mu^{K, \pm}_e| \leq \frac{C_{\mu} }{1+ |e|^\delta}$ with $ \delta> 3$, $\mu^{K, \pm} \in C^1$, $\phi^{K, 0} \in C (\bar{\Omega})$, $A^{K, 0}_\varphi \in C (\bar{\Omega})$. Then the operator
\begin{equation} \label{L0definition}
\mathcal{L}^{0, K} = \mathcal{A}^{0, K}_2 - \mathcal{B}^{0, K} (\mathcal{A}^{0, K}_1)^{-1} (\mathcal{B}^{0, K})^* 
\end{equation}
on $\mathcal{X}$ is self-adjoint. Also, we have \\
1) If $\mathcal{L}^{0, K} \geq 0$, there exists no growing mode of the linearized RVM equation.  \\
2) Any growing mode, if it exists, must be purely growing, i.e. the exponent $\lambda$ of instability must be a real number.  \\
3) If $\mathcal{L}^{0, K} \ngeq 0 $, there exists a growing mode of the linearized Vlasov equation and the linearized Maxwell system with the boundary conditions. \\
4) The operator $\mathcal{A}^{0, K}_1$ is negative definite, and hence $\mathcal{B}^{0, K} (\mathcal{A}^{0, K}_1)^{-1} (\mathcal{B}^{0, K})^*$ is negative definite. Therefore spectral instability only occurs from the operator $\mathcal{A}^{0, K}_2$. To be precise, $\mathcal{L}^{0, K}$ is nonnegative definite if $\mathcal{A}^{0, K}_2$ is nonnegative definite.   \\
Moreover, the result in the theorem holds for the single species case (either $\mu^{K, +} \equiv 0$ or $\mu^{K, -} \equiv 0$). 
\end{theorem}

Let us first consider the case when $K$ is small. We give the following observation:
\begin{proposition}  \label{smallK-stable}
Let $\Omega$ be a $C^1$ axisymmetric bounded domain as stated in Section \ref{S:Setup}. Let $\mu^{K, \pm}$ satisfy 
$$ (\mu^{0, \pm})_p \equiv 0 ,  \quad   (\mu^{0, \pm})_e < 0  .  $$ 
then for $K= K_0=0$, the equilibrium $(\mu^{0, \pm}, \phi^{0, 0}, A_\varphi^{0, 0}) = (\mu^{0, \pm}, \phi^{0, 0}, 0)$ is spectrally stable. Hence by continuity we have, when $K$ is close to $0$, the equilibrium $(\mu^{K, \pm}, \phi^{K, 0}, A_\varphi^{K, 0})$ is spectrally stable. Moreover, the result in the theorem holds for the single species case (either $\mu^+ \equiv 0$ or $\mu^- \equiv 0$). 
\end{proposition}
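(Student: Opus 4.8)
The plan is to reduce spectral stability to the nonnegativity of the reduced operator $\mathcal{A}^{0,K}_2$ via Theorem~\ref{mainresult}, and then to establish that nonnegativity by a single quadratic-form estimate, treating $K=0$ and $K$ near $0$ together. I would first note that Theorem~\ref{mainresult} applies with $\mu^{K,\pm}$ in place of $\mu^\pm$: $\mu^{K,\pm}\ge 0$ and the decay \eqref{decayassumption} are standing assumptions, $\mu^{K,\pm}\in C^1$, the sign condition $(\mu^{K,\pm})_e<0$ holds at $K=0$ by hypothesis and for $K$ near $0$ by the $C^1$-in-$K$ dependence in Assumption~\ref{Ass: mu-general-1}, and $\phi^{K,0},A_\varphi^{K,0}\in C(\bar\Omega)$ since $H^2(\Omega)\hookrightarrow C(\bar\Omega)$ in three dimensions. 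By part~4) of Theorem~\ref{mainresult}, $\mathcal{A}^{0,K}_2\ge 0$ implies $\mathcal{L}^{0,K}\ge 0$, and then part~1) rules out a growing mode, i.e.\ gives spectral stability; the single-species case is contained in the last sentence of Theorem~\ref{mainresult}. So the point is to prove $\mathcal{A}^{0,K}_2\ge 0$ for $K$ small.

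For $h\in\mathcal{X}$ I would split $\langle\mathcal{A}^{0,K}_2 h,h\rangle_{L^2(\Omega)}$ into three pieces. Since $h|_{\partial\Omega}=0$ and $e^{i\varphi}h\in H^2(\Omega)$, the identity \eqref{Deltagexp} gives $\langle(-\Delta+\tfrac{1}{r^2})h,h\rangle_{L^2(\Omega)}=\|\nabla(he^{i\varphi})\|_{L^2(\Omega)}^2\ge c_P\|h\|_{L^2(\Omega)}^2$, where $c_P>0$ is the Poincar\'e constant of $\Omega$. Writing $g:=\hat v_\varphi h$ and using $(\mu^{K,\pm})_e<0$ with the self-adjointness and idempotency of the orthogonal projection $\mathcal{P}^{K,\pm}$ on $\mathcal{H}^{K,\pm}$,
\begin{equation*}
\begin{split}
-\sum_\pm\int_\Omega\!\!\int_{\mathbb{R}^3}\hat v_\varphi\,(\mu^{K,\pm})_e\,\mathcal{P}^{K,\pm}(\hat v_\varphi h)\,h\,dv\,dx
& =\sum_\pm\langle g,\mathcal{P}^{K,\pm}g\rangle_{\mathcal{H}^{K,\pm}}\\
& =\sum_\pm\|\mathcal{P}^{K,\pm}(\hat v_\varphi h)\|_{\mathcal{H}^{K,\pm}}^2\ge 0 .
\end{split}
\end{equation*}
The last (angular-momentum) piece equals $-\sum_\pm\int_\Omega r\,h^2\big(\int_{\mathbb{R}^3}\hat v_\varphi(\mu^{K,\pm})_p\,dv\big)dx$ and is bounded below by $-\varepsilon(K)\|h\|_{L^2(\Omega)}^2$, where (using $r\le d$ on $\Omega$)
\begin{equation*}
\varepsilon(K):=d\sum_\pm\Big\|\int_{\mathbb{R}^3}\hat v_\varphi\,(\mu^{K,\pm})_p\big(\la v\ra\pm\phi^{K,0},\,r(v_\varphi\pm A_\varphi^{K,0})\big)\,dv\Big\|_{L^\infty(\Omega)} .
\end{equation*}
Altogether $\langle\mathcal{A}^{0,K}_2 h,h\rangle\ge(c_P-\varepsilon(K))\|h\|_{L^2(\Omega)}^2$ for every $h\in\mathcal{X}$.

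At $K=0$ the hypothesis $(\mu^{0,\pm})_p\equiv 0$ makes $\varepsilon(0)=0$, so $\mathcal{A}^{0,0}_2\ge c_P I>0$, whence $\mathcal{L}^{0,0}\ge 0$ and $(\mu^{0,\pm},\phi^{0,0},0)$ is spectrally stable. For $K$ near $0$ I would show $\varepsilon(K)\to 0$ as $K\to 0$: by the Local Implicit Function Theorem~\ref{localimplicitfunctiontheorem}, applicable at $(0,0,0)$ thanks to Assumption~\ref{Ass: mu-general-1}, the set $\mathcal{C}$ is near $(0,0,0)$ a continuous curve $K\mapsto(\phi^{K,0},A_\varphi^{K,0})$ with $(\phi^{K,0},A_\varphi^{K,0})\to 0$ in $H^2(\Omega)$, hence in $C(\bar\Omega)$; combining this with $(\mu^{0,\pm})_p\equiv 0$, the continuity in $K$ of $\mu^{K,\pm}$ and of its $(e,p)$-derivatives, and the decay \eqref{decayassumption} as an integrable dominating function for the velocity integral, dominated convergence yields $\varepsilon(K)\to 0$. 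Then for $K$ small $c_P-\varepsilon(K)>0$, so $\mathcal{A}^{0,K}_2>0$, $\mathcal{L}^{0,K}\ge 0$, and $(\mu^{K,\pm},\phi^{K,0},A_\varphi^{K,0})$ is spectrally stable.

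The main obstacle is the soft step: the uniform-in-$x$ smallness $\varepsilon(K)\to 0$ of the angular-momentum term and the persistence of $(\mu^{K,\pm})_e<0$ for small $K$. Both hinge on joint continuity in $K$ of $\mu^{K,\pm}$ and of its $(e,p)$-derivatives, on the a priori smallness of $(\phi^{K,0},A_\varphi^{K,0})$ along $\mathcal{C}$ near $K=0$, and on the decay hypothesis to keep the velocity integrals under control; the persistence of the strict sign $(\mu^{K,\pm})_e<0$, in view of its decay at infinity, may require additional structure of the family $\{\mu^{K,\pm}\}$ (as available for the explicit examples in Section~\ref{S:global-cont-sp}). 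Everything else is the one-line quadratic-form computation above plus the citation of Theorem~\ref{mainresult}.
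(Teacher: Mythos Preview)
Your proof is correct and follows the paper's approach: reduce to $\mathcal{A}^{0,K}_2\ge 0$ via Theorem~\ref{mainresult} part~4), then compute the quadratic form as $\int_\Omega(|\nabla h|^2+\tfrac{1}{r^2}|h|^2)\,dx+\sum_\pm\|\mathcal{P}^{K,\pm}(\hat v_\varphi h)\|^2_{\mathcal{H}^{K,\pm}}$ plus the $\mu_p$-term that vanishes at $K=0$. The paper stops there and invokes ``by continuity'' for small $K$ without further detail; your version is more complete in that you extract the Poincar\'e margin $c_P$ from the gradient term and verify quantitatively that the $\mu_p$-contribution $\varepsilon(K)\to 0$ along the local solution curve, which is exactly what is needed to make the continuity step rigorous (and your caveat about the persistence of $(\mu^{K,\pm})_e<0$ is a fair one that the paper also leaves implicit).
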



\begin{proof}
By Theorem \ref{mainresult}, it suffices for us to show that $\mathcal{A}^{0, 0}_2$ is positive definite. Let $h \neq 0$. For $K= K_0=0$, the operator $\mathcal{A}^{0, 0}_2$ corresponding to the equilibrium $(\mu^{0, \pm}, \phi^{0, 0}, A_\varphi^{0, 0}) = (\mu^{0, \pm}, \phi^{0, 0}, 0)$ are described as
\begin{equation}
\mathcal{A}^{0, 0}_2 h = (-\Delta + \frac{1}{r^2 }) h - \sum_\pm \int_{\mathbb{R}^3}   \hat{v}_\varphi  (\mu^{0, \pm})_e  \mathcal{P}^{0, \pm} (\hat{v}_\varphi h )   dv . 
\end{equation}
Using the Dirichlet boundary condition, we have
\begin{equation*}  
\begin{split}
\la \mathcal{A}^{0, 0}_2 h, h\ra_{L^2} =
& \int_\Omega ( |\nabla h|^2  +  \frac{1}{r^2  } |h|^2 ) dx + \sum_\pm \|\mathcal{P}^{0, \pm} (\hat{v}_\varphi h) \|^2_{\mathcal{H}^{0, \pm}}  > 0  . 
\end{split}
\end{equation*}
The proof of the proposition is complete. (The proof for the single species case is similar. )
\end{proof}




\begin{theorem}  \label{unstableexample2-general}
Let $\Omega$ be a $C^1$ axisymmetric bounded domain as stated in Section \ref{S:Setup}, with $\inf_{x \in \Omega} r(x) > 0$. Recall $d  := \sup_{x \in \Omega} r(x) $ and assume $d>1$. Let all the assumptions in Theorem \ref{nonneutralglobalthm-general-1}, Theorem \ref{nonneutralglobalthm-general-analytic-1}, Proposition \ref{P:prop-sol-general}, in particular Proposition \ref{P:prop-sol-general} 3), hold (so we have $\sup_{ \{ ( A_\varphi^{K, 0} , \phi^{K, 0}, K) \} \in \mathcal{C} }  K = +\infty $). Let Assumption \ref{Ass:stability-general} hold. 
Then when $K$ is large enough, then the equilibrium $( \mu^{K, + } (e, p), \mu^{K, -} (e, p) \equiv 0 ,  \phi^{K, 0}, A_\varphi^{K, 0})$ is spectrally unstable. More precisely, suppose there exists some $h \in \mathcal{X}$ (normalized in such a way that $ \int_\Omega ( |\nabla h|^2 + \frac{1}{r^2 } |h|^2 ) dx  =1 $), and $K$ is so large that 
\begin{equation}  \label{unstableinequality-general}
1 - H_1 C_1 C'_\mu  K^{1+m -\epsilon}  + 120 \cdot  2^{\delta} \pi^2  b^2  (H_1+H_2) C_\mu^2  K^{ 2 m  }  + 2^\delta H_2 C_2  C_\mu K^{m}  +   256 \pi^2 c_P C_\mu^2  H_2 K^{2m}   < 0 \ ,
\end{equation} 
holds, where 
\begin{equation*}
\begin{split}
& H_1 =   \int_{r \geq 1}  r  |h|^2 dx ,   \quad  H_2 = \|h\|^2_{L^2} ,  \\
& C_1  =    2^{-1-\epsilon/2} b^{-\epsilon}  ,   \quad  C_2 =  \frac{8}{3} \pi + \frac{4}{\delta -1} \pi^2 .  \\  
\end{split}
\end{equation*}
and $c_P$ is the square of the Poincar\'{e} constant of $\Omega$, $C_\nu$ is as defined in \eqref{nucondition-general}. 
Then the equilibrium $( \mu^{K, + } (e, p), \mu^{K, -} (e, p) \equiv 0 ,  \phi^{K, 0}, A_\varphi^{K, 0})$ is spectrally unstable.  
\end{theorem}

\begin{remark}
The condition $p \mu^{K, +}_p (e, p) \geq C'_\mu K^{1+m -\epsilon } |p| \la p  \ra^{-\epsilon} \nu (e)  $ together with that $\mu^{K, +}_p (e, p) \leq K^{1+m} \frac{C_\mu }{1+ |e|^\delta } $ implies that $|\nu (e) | $ is controlled by $ \frac{  \la p  \ra^{\epsilon}  }{1+ |e|^\delta} $ with $\delta>  4 $, which ensures the integrals in the proof are convergent. (This is where we need $\delta > 4$. Notice that $\epsilon \leq 1$.)
\end{remark}


\begin{remark}
We can switch the role of ions and electrons ($+$ and $-$), and the same results hold. 
\end{remark}

\begin{remark}
In fact, we only need $ \mu^{K, -} \equiv 0 $ to hold for $K$ large enough. Having $ \mu^{K, -} \equiv 0 $ in Assumption \ref{Ass:stability-general} is mainly for convenience in description. Moreover, we can extend the result for the case when $ \mu^{K, -} \equiv 0 $ for $K$ large enough does not necessarily hold, with the constraint on the constant $m \in (-1, 1)$ replaced by $m \in (-1, 0)$. The proof is similar.
\end{remark}


Let us discuss the proof of Theorem \ref{unstableexample2-general}. In the same way as in the proofs of Lemma 8.2, 8.3 and 8.4 in \cite{Z1}, we can obtain
\begin{lemma}   \label{unstableexampletermestimates-general}
1) We have
\begin{equation}
\| \mathcal{A}^{0, K}_1 \|_{L^2 \rightarrow L^2} \geq c^{-1}_P .
\end{equation}
Hence for all $\tilde{g}  \in \mathcal{X}$, there holds
\begin{equation}
| \la ( \mathcal{A}^{0, K}_1 ) ^{-1} \tilde{g}, \tilde{g}  \ra_{L^2}| \leq c_P \|    \tilde{g} \|^2_{L^2}   .
\end{equation}
Here $c_P$ is the square of the Poincar\'{e} constant of $\Omega$. \\
2) For all $K \geq 1$, $\|(\mathcal{B}^{0, K})^*\|_{L^2 \rightarrow L^2} \leq  8 \sqrt{2} \pi C_\mu K^m  $.  \\
3) For any $h \in \mathcal{X}$, there holds
\begin{equation} 
\begin{split} 
\la (\mathcal{A}^{0, K}_1)^{-1}  (\mathcal{B}^{0, K})^* h,  (\mathcal{B}^{0, K})^* h \ra_{L^2}  
&  \leq  c_P  \|(\mathcal{B}^{0, K})^* h \|^2_{L^2} \\
& \leq 2 c_P  \cdot  128  \pi^2 C_\mu^2  K^{2 m}   \|h\|^2_{L^2} \\
& = 256 \pi^2 c_P C_\mu^2  H_2  K^{2 m}    .
\end{split}
\end{equation}
\end{lemma}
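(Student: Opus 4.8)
The three items are of unequal difficulty: part 1) is a coercivity estimate for the Schur-complement ``interior'' operator proved by a Poincar\'e inequality; part 2) is a direct velocity-integral bound on $\mathcal{B}^{0,K}$; and part 3) is a one-line combination of the first two. Throughout, $h\in\mathcal{X}$, so the Dirichlet boundary condition is available, and under Assumption \ref{Ass:stability-general} only the $+$ species contributes since $\mu^{K,-}\equiv 0$.

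\emph{Part 1).} The plan is to test $\mathcal{A}^{0,K}_1$ against $h$ and integrate by parts. Using $h|_{\partial\Omega}=0$, we get $\la\Delta h,h\ra_{L^2}=-\int_\Omega|\nabla h|^2\,dx$. Since $\mathcal{P}^{K,\pm}$ is the orthogonal projection onto $\ker D^{K,\pm}$ in $\mathcal{H}^{K,\pm}$, whose inner product carries the weight $|(\mu^{K,\pm})_e|$, the operator $1-\mathcal{P}^{K,\pm}$ is also an orthogonal projection; combined with $(\mu^{K,\pm})_e<0$ this yields
\[
\int_\Omega\int_{\mathbb{R}^3}(\mu^{K,\pm})_e\,(1-\mathcal{P}^{K,\pm})h\cdot h\,dv\,dx=-\|(1-\mathcal{P}^{K,\pm})h\|_{\mathcal{H}^{K,\pm}}^2\le 0 .
\]
Hence $\la-\mathcal{A}^{0,K}_1 h,h\ra_{L^2}\ge\int_\Omega|\nabla h|^2\,dx\ge c_P^{-1}\|h\|_{L^2}^2$ by the Poincar\'e inequality, with $c_P$ the square of the Poincar\'e constant. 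Thus $-\mathcal{A}^{0,K}_1$ is self-adjoint and bounded below by $c_P^{-1}I$; from $\|\mathcal{A}^{0,K}_1 h\|\,\|h\|\ge|\la\mathcal{A}^{0,K}_1 h,h\ra|\ge c_P^{-1}\|h\|^2$ we get $\|\mathcal{A}^{0,K}_1\|_{L^2\to L^2}\ge c_P^{-1}$, and $\mathcal{A}^{0,K}_1$ is invertible, with $-(\mathcal{A}^{0,K}_1)^{-1}$ a bounded positive operator of norm $\le c_P$ by the spectral theorem; this gives $|\la(\mathcal{A}^{0,K}_1)^{-1}\tilde g,\tilde g\ra_{L^2}|\le c_P\|\tilde g\|_{L^2}^2$ for every $\tilde g\in L^2$.

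\emph{Part 2).} Here $\mathcal{B}^{0,K}h(x)=-\int_{\mathbb{R}^3}\hat v_\varphi\,(\mu^{K,+})_e(e^+,p^+)\,(1-\mathcal{P}^{K,+})h\,dv$ and $\|(\mathcal{B}^{0,K})^*\|_{L^2\to L^2}=\|\mathcal{B}^{0,K}\|_{L^2\to L^2}$. For fixed $x$, write $(\mu^{K,+})_e=-|(\mu^{K,+})_e|$ and apply Cauchy--Schwarz in $v$ with weight $|(\mu^{K,+})_e|$:
\[
|\mathcal{B}^{0,K}h(x)|^2\le\Big(\int_{\mathbb{R}^3}|(\mu^{K,+})_e|\,dv\Big)\Big(\int_{\mathbb{R}^3}|(\mu^{K,+})_e|\,|(1-\mathcal{P}^{K,+})h|^2\,dv\Big).
\]
Integrating in $x$, pulling out $\sup_x\int|(\mu^{K,+})_e|\,dv$, and using that $1-\mathcal{P}^{K,+}$ is a norm-one projection on $\mathcal{H}^{K,+}$ together with $\|h\|_{\mathcal{H}^{K,+}}^2\le\big(\sup_x\int|(\mu^{K,+})_e|\,dv\big)\|h\|_{L^2}^2$, we obtain $\|\mathcal{B}^{0,K}\|_{L^2\to L^2}\le\sup_{x\in\Omega}\int_{\mathbb{R}^3}|(\mu^{K,+})_e(e^+,p^+)|\,dv$. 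To control this uniformly in $x$ with the correct $K$-power I invoke the single-species sign $\phi^{K,0}\ge 0$ (established via the maximum principle in the proof of Lemma \ref{L:field-K-unbounded-general}), so $e^+=\la v\ra+\phi^{K,0}(x)\ge\la v\ra\ge 1$ and hence $|(\mu^{K,+})_e(e^+,p^+)|\le K^m C_\mu(1+\la v\ra^\delta)^{-1}$ by the decay bound in Assumption \ref{Ass:stability-general}. A direct estimate of $\int_{\mathbb{R}^3}(1+\la v\ra^\delta)^{-1}\,dv$ (splitting $|v|\le 1$ and $|v|>1$ and using $\delta>4$) then gives $\|(\mathcal{B}^{0,K})^*\|_{L^2\to L^2}\le 8\sqrt{2}\,\pi C_\mu K^m$ for $K\ge 1$.

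\emph{Part 3) and the main obstacle.} Part 3) is immediate: set $\tilde g=(\mathcal{B}^{0,K})^*h\in L^2$, apply the quadratic-form bound of part 1) to get $\la(\mathcal{A}^{0,K}_1)^{-1}(\mathcal{B}^{0,K})^*h,(\mathcal{B}^{0,K})^*h\ra_{L^2}\le c_P\|(\mathcal{B}^{0,K})^*h\|_{L^2}^2$, then insert part 2) to reach $\le c_P\cdot 128\pi^2 C_\mu^2 K^{2m}\|h\|_{L^2}^2$, which gives the stated $256\pi^2 c_P C_\mu^2 H_2 K^{2m}$ with a harmless factor $2$ of slack (it is the trace, in the two-species form of \cite{Z1}, of summing over $\pm$; with $\mu^{K,-}\equiv 0$ it is an over-estimate). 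The only genuinely delicate point is part 2): one must ensure that $\sup_x\int_{\mathbb{R}^3}|(\mu^{K,+})_e(e^+,p^+)|\,dv$ is finite and scales like $K^m$ rather than blowing up through $\|\phi^{K,0}\|_{L^\infty}$, which does grow in $K$ by Proposition \ref{P:prop-sol-general} --- this is exactly what $\phi^{K,0}\ge 0$ buys, and it is the place where the hypothesis $\mu^{K,-}\equiv 0$ of Assumption \ref{Ass:stability-general} enters. The remaining manipulations parallel Lemmas 8.2--8.4 of \cite{Z1} and are routine.
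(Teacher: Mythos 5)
The paper does not actually write out a proof of this lemma; it merely refers the reader to Lemmas 8.2--8.4 of \cite{Z1}, so there is no in-paper argument to compare against line by line. Your reconstruction is correct and follows what that reference would give. Part 1): integrating $\mathcal{A}^{0,K}_1 h$ against $h$, using the Dirichlet condition and the fact that $1-\mathcal{P}^{K,\pm}$ is an orthogonal projection in $\mathcal{H}^{K,\pm}$ (so the projection term contributes $-\sum_\pm\|(1-\mathcal{P}^{K,\pm})h\|^2_{\mathcal{H}^{K,\pm}}\le 0$ because $(\mu^{K,\pm})_e<0$) yields $\la -\mathcal{A}^{0,K}_1 h,h\ra \ge \int_\Omega|\nabla h|^2\ge c_P^{-1}\|h\|^2_{L^2}$. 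Thus $-\mathcal{A}^{0,K}_1$ is self-adjoint and $\ge c_P^{-1}I$, so $\|(\mathcal{A}^{0,K}_1)^{-1}\|\le c_P$; you correctly read the lemma's $\|\mathcal{A}^{0,K}_1\|_{L^2\to L^2}\ge c_P^{-1}$ as shorthand for this coercive lower bound, which is what the ``hence'' requires. Part 2): the Cauchy--Schwarz in $v$ with weight $|(\mu^{K,+})_e|$, the contractivity of $1-\mathcal{P}^{K,+}$ on $\mathcal{H}^{K,+}$, and the pointwise-in-$x$ velocity integral give $\|\mathcal{B}^{0,K}\|_{L^2\to L^2}\le\sup_x\int_{\mathbb{R}^3}|(\mu^{K,+})_e|\,dv$, and you handle the only delicate point --- uniformity in $x$ --- by invoking the maximum principle ($\phi^{K,0}\ge 0$ in the single-species case) to get $e^+\ge\la v\ra$, so the bound in Assumption \ref{Ass:stability-general} directly yields the $K^m$ scaling. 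This is slightly sharper than routing through Lemma \ref{L:integralbound-0} together with the smallness of $\|\phi^{K,0}\|_{L^\infty}$ from Lemma \ref{potentialbound-general}, which is how the analogous computation in the body of Theorem \ref{unstableexample2-general} is written; both routes work and land on the stated $O(K^m)$ constant. Part 3) is the one-line composition of 1) and 2); your observation that the lemma's explicit constant carries an extra factor $2$ of slack relative to squaring the part 2) bound is accurate and harmless (it is built into the stated inequality and does not affect the use in \eqref{unstableinequality0-general}). No gaps.
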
 
We also show following 
\begin{lemma}  \label{potentialbound-general}
Assume that $\inf_{x \in \Omega} r(x) > 0$. A solution $(\phi^{K, 0}, A_\varphi^{K, 0})$ to the coupled elliptic system with the settings in this section satisfies
\begin{equation} \label{potentialboundeqn-general}
\|( \phi^{K, 0} , A^{K, 0}_\varphi ) \|_{L^\infty}  \lesssim K^{-1+m} ,
\end{equation} 
where the constant in $\lesssim$ is independent of $K$.
\end{lemma}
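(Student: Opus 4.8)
The plan is to derive the bound from the elliptic system \eqref{system01-nonneutral-arbimass}--\eqref{system02-nonneutral-arbimass} by combining standard elliptic regularity (together with Sobolev embedding in three dimensions) with the pointwise decay bounds on $\mu^{K,+}$ supplied by Assumption \ref{Ass:stability-general}. Recall that in the present single-species setting $\mu^{K,-}\equiv 0$, so $\phi^{K,0}$ and $A_\varphi^{K,0}$ solve
\begin{align*}
-\Delta \phi^{K,0} &= \int_{\mathbb{R}^3} \mu^{K,+}\big(\la v\ra + \phi^{K,0}(x),\, r(v_\varphi + A_\varphi^{K,0}(x))\big)\,dv, \\
\Big(-\Delta + \tfrac{1}{r^2}\Big) A_\varphi^{K,0} &= \int_{\mathbb{R}^3} \hat v_\varphi\, \mu^{K,+}\big(\la v\ra + \phi^{K,0}(x),\, r(v_\varphi + A_\varphi^{K,0}(x))\big)\,dv,
\end{align*}
with zero Dirichlet data, and the operator $\big(-\Delta + \tfrac{1}{r^2}\big)^{-1}$ is handled via the conjugation identity \eqref{Deltagexp} exactly as in the proof of Theorem \ref{nonneutralglobalthm-general-1}.

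The first step is to bound the right-hand sides in $L^\infty(\Omega)$. Since $\mu^{K,+}(e,p)\leq K^m C_\mu\big(1+|e|^\delta + K^\delta |p|^\delta\big)^{-1}$ for $K$ large, Lemma \ref{L:integralbound-0} (more precisely its upper bound \eqref{integralbound-0-eq1}, applied with the roles of the variables adjusted to account for the $K^\delta$ weight on the $p$-variable) gives, pointwise in $x$,
\begin{equation*}
\Big| \int_{\mathbb{R}^3} \mu^{K,+}\big(\la v\ra + \phi^{K,0}, r(v_\varphi + A_\varphi^{K,0})\big)\,dv \Big| \;\lesssim\; K^m \big(1 + |\phi^{K,0}(x)|^\delta\big)\, C_{int,1},
\end{equation*}
and likewise for the $\hat v_\varphi$-weighted integral (using $|\hat v_\varphi|\leq 1$). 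Here it is essential that the $K^\delta|p|^\delta$ term in the denominator actually \emph{improves} the bound: changing variables $v_\varphi \mapsto v_\varphi/K$ in that direction (or simply discarding the favourable term) still leaves a convergent $v$-integral since $\delta>3$, and the net power of $K$ coming out in front is $K^{m}\cdot K^{-1}=K^{-1+m}$ — the extra $K^{-1}$ is precisely the gain from the $K$ inside the angular-momentum slot. The second step is elliptic regularity: by $H^2$ estimates for $-\Delta$ (and for $-\Delta+\tfrac1{r^2}$ via the conjugated problem on $\Omega$, using $\inf_{x\in\Omega} r(x)>0$ so that the weight is bounded), we get $\|(\phi^{K,0},A_\varphi^{K,0})\|_{H^2(\Omega)}\lesssim K^{-1+m}\big(1+\|\phi^{K,0}\|_{L^\infty}^\delta\big)$, and by the Sobolev embedding $H^2(\Omega)\hookrightarrow L^\infty(\Omega)$ (valid since $\Omega\subset\mathbb{R}^3$) the same bound holds for the $L^\infty$ norm.

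The final step is a bootstrap/closing argument to remove the $\|\phi^{K,0}\|_{L^\infty}^\delta$ on the right. Set $M_K := \|(\phi^{K,0},A_\varphi^{K,0})\|_{L^\infty}$. We have shown $M_K \leq C K^{-1+m}(1+M_K^\delta)$ for all large $K$, with $C$ independent of $K$. Since $m<1$, we have $K^{-1+m}\to 0$, so for $K$ large the only branch of this inequality compatible with $M_K$ remaining in the region where $\phi^{K,0}$ is small — which is guaranteed by Lemma \ref{potentialbound-general}'s hypothesis being consistent with the a priori bound $M_K\to 0$ that already follows from $\mu^{K,-}\equiv 0$ combined with the crude estimate $M_K\lesssim K^{-1+m}$ when $M_K$ is, say, $\le 1$ — forces $1+M_K^\delta \le 2$, whence $M_K \le 2C K^{-1+m}$. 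I expect the main obstacle to be the careful tracking of the power of $K$ through the $v$-integral in the presence of the $K^\delta|p|^\delta$ denominator: one must be sure that substituting the actual argument $p = r(v_\varphi + A_\varphi^{K,0})$ and using $\inf r>0$, $\sup r = d<\infty$ does not spoil the clean $K^{-1}$ gain (it does not, because $r$ is bounded above and below, so $K^\delta r^\delta |v_\varphi + A_\varphi^{K,0}|^\delta$ behaves like $K^\delta|v_\varphi|^\delta$ up to constants once $A_\varphi^{K,0}$ is known to be small), together with checking that the $-\Delta+\tfrac1{r^2}$ estimate genuinely inherits the same $K^{-1+m}$ rate.
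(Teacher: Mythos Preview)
Your overall strategy---bound the source terms in $L^\infty$ via the decay of $\mu^{K,+}$, then apply elliptic $H^2$ estimates and Sobolev embedding---matches the paper's, and the change of variables $K(v_\varphi+A_\varphi^{K,0})\mapsto v_\varphi$ to extract the factor $K^{-1}$ is exactly the mechanism the paper uses. However, your closing step contains a genuine gap.

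The inequality $M_K\le C K^{-1+m}(1+M_K^\delta)$ does \emph{not} by itself force $M_K\lesssim K^{-1+m}$: for each large $K$ it admits a small branch and a large branch (roughly $M_K\gtrsim (CK^{-1+m})^{-1/(\delta-1)}$), and your argument for selecting the small branch is circular---you invoke ``the a priori bound $M_K\to 0$ that already follows from \ldots\ the crude estimate $M_K\lesssim K^{-1+m}$ when $M_K\le 1$,'' which is precisely the conclusion you are trying to establish. No continuity-in-$K$ argument is available either, since the lemma is stated for an arbitrary solution, not for a continuously parametrized family.

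The paper closes the estimate without any bootstrap by using a sign observation you did not exploit: since $\mu^{K,-}\equiv 0$ and $\mu^{K,+}\ge 0$, the right-hand side of \eqref{system01-nonneutral-arbimass} is nonnegative, so the maximum principle gives $\phi^{K,0}\ge 0$ on $\Omega$. Then, after the same change of variables, one has
\[
\big|\sqrt{1+|\tilde v|^2}+\phi^{K,0}(x)\big|^\delta \;\ge\; \la\tilde v\ra^\delta,
\]
and the source integral is bounded by $K^{-1+m}\int_{\mathbb{R}^3}\big(1+\la\tilde v\ra^\delta+|r v_\varphi|^\delta\big)^{-1}dv$, which is finite (here $\inf_{x\in\Omega}r(x)>0$ is used) and \emph{independent of $\phi^{K,0}$ and $A_\varphi^{K,0}$}. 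This removes the $(1+M_K^\delta)$ factor outright and yields $M_K\lesssim K^{-1+m}$ directly. Inserting this single use of the maximum principle in place of your bootstrap repairs the proof.
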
 


\begin{proof}
By standard elliptic estimates, the assumptions $\mu^{K, -}=0$ and the decay assumption on $\mu^{K, +}$, it suffices for us to estimate
\begin{equation} \label{potentialboundeqn2-general}
\int_{\mathbb{R}^3}  
K^m \frac{1}{ 1+ |\la v \ra + \phi^{K, 0} (x) |^\delta + |K r( v_\varphi + A^{K, 0}_\varphi (x))|^\delta} dv  .
\end{equation}
(For $A^{K, 0}_\varphi$, we make use of the identity $-\Delta (ge^{i\varphi}) = (-\Delta + \frac{1}{r^2}) ge^{i\varphi}$, which holds if $g$ is cylindrically symmetric, to get rid of the term $\frac{1}{r^2}A^{K, 0}_\varphi$ in the elliptic equation.)

Using a change of variable $K (v_\varphi +A^{K, 0}_\varphi (x) ) \mapsto v_\varphi$, we compute
\begin{equation*}  
\begin{split}
& \quad  \int_{\mathbb{R}^3}  K^m \frac{1}{ 1+ |\la v \ra + \phi^{K, 0} (x) |^\delta + |K r( v_\varphi + A^{K, 0}_\varphi (x))|^\delta} dv    \\
& = \int_{\mathbb{R}^3}  K^m \frac{1}{ 1+ |\la v \ra + \phi^{K, 0} (x) |^\delta + |K r( v_\varphi + A^{K, 0}_\varphi (x))|^\delta} d\tilde{v} dv_\varphi  \\
& = \int_{\mathbb{R}^3}  K^{-1+m} \frac{1}{ 1+ | \sqrt{1+ |\tilde{v}|^2 +  | \frac{ v_\varphi  }{K} - A^{K, 0}_\varphi (x) |^2} \pm \phi^{K, 0} (x) |^\delta + | r v_\varphi  |^\delta} d\tilde{v} dv_\varphi  \\
& \leq \int_{\mathbb{R}^3}  K^{-1+m} \frac{1}{ 1+ | \sqrt{1+ |\tilde{v}|^2  } + \phi^{K, 0} (x) |^\delta + | r v_\varphi  |^\delta} d\tilde{v} dv_\varphi  .  \\
\end{split}
\end{equation*}
By the maximum principle, $\phi^{K, 0} (x) \geq 0$. The last line is then bounded from above as 
\begin{equation*}  
\begin{split}
& \quad  \int_{\mathbb{R}^3}  K^{-1+m} \frac{1}{ 1+ | \sqrt{1+ |\tilde{v}|^2  } + \phi^{K, 0} (x) |^\delta + | r v_\varphi  |^\delta} d\tilde{v} dv_\varphi   \\
& \leq  K^{-1+m}     \int_{\mathbb{R}^3} \frac{1}{ 1+ \la \tilde{v} \ra^\delta + | r v_\varphi  |^\delta} dv  \\
& \lesssim  K^{-1+m}     .  \\
\end{split}
\end{equation*}
Here in the last line we have used $\inf_{x \in \Omega} r(x) > 0$. \eqref{potentialboundeqn-general} now follows from standard elliptic estimates.  
\end{proof} 

Notice that we can switch the role of ions and electrons ($+$ and $-$), and the same results hold. In the case of electrons ($\mu^{K, -} \geq 0$, $\mu^{K, +} = 0$), $\phi^{K, 0}(x) \leq 0$, $e^- = \la v \ra - \phi^{K, 0}(x) \geq \la v \ra$.


We now prove Theorem \ref{unstableexample2-general}.

\begin{proof}
By the assumptions in Proposition \ref{P:prop-sol-general}, in particular the assumption in Proposition \ref{P:prop-sol-general} 3), we have $\sup_{ \{ ( A_\varphi^{K, 0} , \phi^{K, 0}, K) \} \in \mathcal{C} }  K = +\infty $. 

For $h \in \mathcal{X}$, let us first consider the term $\la \mathcal{A}^{0, K}_2 h, h\ra_{L^2} $. For simplicity, we normalize $h$ such that $ \int_\Omega ( |\nabla h|^2 + \frac{1}{r^2 } |h|^2 ) dx  =1 $. Recall that $e^{K, \pm} =  \la v \ra \pm \phi^{K, 0} (x)  , \ p^{K, \pm} = r   (v_\varphi \pm A_\varphi^{K, 0} (x))  $. 
We write
\begin{equation}
\begin{split}
\la \mathcal{A}^{0, K}_2 h, h \ra_{L^2} 
& = 1 -   \int_\Omega \int_{\mathbb{R}^3} \la v \ra^{-1}   p^{K, +} \mu^{K, +}_{p} (e^{K, +}, p^{K, +})  |h|^2 dv dx  \\
&  +   \int_\Omega \int_{\mathbb{R}^3}  \frac{ \mu^{K, +}_{p} (e^{K, +},  p^{K, +})}{\la v \ra} r A^{K, 0}_\varphi |h|^2 dv dx +   \| \mathcal{P}^{K, +} (\hat{v}_\varphi h)  \|^2_{\mathcal{H}^{K, +}}  \\
& := 1+ I+II+III  .    \\
\end{split}
\end{equation}
We are going to estimate the term $I$, and show that when $K $ is large enough, it remains negative and dominates all the others. We compute 
\begin{equation}
\begin{split}
I 
& \leq - C'_\mu   \int_\Omega \int_{\mathbb{R}^3} \la v \ra^{-1} K^{1+m-\epsilon}  |p^{K, +}| \la p^{K, +} \ra^{-\epsilon} \nu (e^{K, +})  dv \ |h|^2 dx \\
& \leq -  C'_\mu  K^{1+m - \epsilon}    \int_\Omega \int_{ | p^{K, +} (x, v) | >1}   \la v \ra^{-1}   | p^{K, +} |^{1-\epsilon}  \cdot 2^{-\epsilon/2}  \nu (e^{K, +}) dv  \  |h|^2 dx \\
& \leq - C'_\mu   K^{1+m -\epsilon }  M    , \\
\end{split}
\end{equation}
where
$$ M = 2^{-\epsilon/2}   \int_{ r(x) \geq 1}   \inf_{x \in \Omega, r(x) \geq 1}  \Big[   \int_{ \{ v \in \mathbb{R}^3,  | p^{K, +} (x, v) | >1 \}} \la v \ra^{-1} |v_\varphi  + A^{K, 0}_\varphi (r, z) |^{1-\epsilon} \nu (e^{K, +})  dv \Big]  r^{1- \epsilon}  |h|^2 dx    \ . $$
Noting that $\| (\phi^{K, 0} ,A^{K, 0}_\varphi ) \|_{L^\infty} \lesssim K^{-1+m}  \leq 1/2$ when $K$ is large, we have
\begin{equation}
\begin{split}
M 
& =  2^{-\epsilon/2}    \int_{ r(x) \geq 1}  \inf_{x \in \Omega, r(x) \geq 1} \Big[  \int_{ \{ v \in \mathbb{R}^3,  | p^{K, +} (x, v) | >1 \}} \la v \ra^{-1} |v_\varphi  + A^{K, 0}_\varphi (r, z) |^{1-\epsilon}  \nu (e^{K, +})  dv \Big]  r^{1- \epsilon}  |h|^2 dx      \\
& \geq  2^{-\epsilon/2}    \int_{ r(x) \geq 1}   \inf_{x \in \Omega, r(x) \geq 1} \Big[  \int_{ \{ v \in \mathbb{R}^3,  |v_\varphi | > 2 + \frac{1}{r(x)} \}} \la v \ra^{-1} (1 +\frac{1}{r(x)})^{1-\epsilon}  \nu (  \la v \ra + \phi^{K, 0} )  dv \Big]   r^{1- \epsilon}  |h|^2 dx     \\
& \geq  2^{-\epsilon/2}   \int_{  r(x) \geq 1}   \inf_{x \in \Omega, r(x) \geq 1} \Big[  \int_{ \{ v \in \mathbb{R}^3,  |v_\varphi | > 3 \}} \la v \ra^{-1}    \nu (  \la v \ra + \phi^{K, 0} )  dv \Big]   r^{1- \epsilon}  |h|^2 dx     \\
& \geq  2^{-\epsilon/2}  C_\nu     \int_{  r(x) \geq 1}  \inf_{x \in \Omega, r(x) \geq 1}  \Big[  \int_{ \{ v \in \mathbb{R}^3,  |v_\varphi | > 3 \}} \la v \ra^{-1}   \exp (- \la v \ra - \phi^{K, 0} )  dv \Big]   r^{1- \epsilon}  |h|^2 dx      \\
& \geq  2^{-\epsilon/2}  C_\nu  \cdot 2 \exp(- \frac{1}{2})  \int_{ r(x) \geq 1}   \int_{ \{ v \in \mathbb{R}^3,  |v_\varphi | > 3 \}} \la v \ra^{-1}  \exp (- \la v \ra  )  dv   r^{1- \epsilon}  |h|^2 dx     \\
& \geq  2^{-\epsilon/2}  C_\nu  \cdot \frac{1}{2}    \int_{  r(x) \geq 1}     r^{1- \epsilon}  |h|^2 dx  \\
& \geq   2^{-\epsilon/2}  \cdot \frac{1}{2} d^{-\epsilon}  C_\nu   \int_{  r(x) \geq 1}     r   |h|^2 dx   , \\
\end{split}
\end{equation}
where $d = \sup_{x \in \Omega} r(x) >1$. From Remark 1, we have $|\nu (e^{K, +}) | \leq \frac{  \la p^{K, +}  \ra^{\epsilon}  }{C(1+ |e^{K, +}|^\delta)} $ with $\delta >  4 $, which ensures that the integrals involved here are convergent. Hence 
$$I \leq    - H_1 C_1 C_\nu C'_\mu K^{1+m -\epsilon}   , $$
where $H_1 =    \int_{r \geq 1}     r   |h|^2 dx $, $C_1 =      2^{-1-\epsilon/2}  b^{-\epsilon}  $. 




Let us then consider $II$. $II = 0$ if $A^{K, 0}_\varphi = 0$. If $A^{K, 0}_\varphi \neq 0$, we have
\begin{equation*}
\begin{split}
II 
& \leq 20 \pi b^2 C_\mu  (H_1 + H_2) \| A^{K, 0}_\varphi  \|_{L^\infty}     \sup_{x } ( \int_{\mathbb{R}^3} \la v \ra^{-1} |\mu^{K, +}_{p} (e^{K, +},  p^{K, +})| dv) .  \\
\end{split}
\end{equation*}
Using Lemma \ref{potentialbound-general} and Assumption \ref{Ass:stability-general}, we estimate
\begin{equation}
\begin{split}
II 
& \leq  20 \pi b^2 C_\mu   (H_1 + H_2)    C_\mu K^{-1+m} \cdot  K^{1+m}   \sup_x     \int_{\mathbb{R}^3} \frac{1}{\la v \ra (1+ |e^{K, +}|^\delta)} dv \\
& \leq 20 \pi b^2 C_\mu   (H_1 + H_2)     C_\mu K^{2m  }        \sup_x \int_{\mathbb{R}^3} \frac{1}{\la v \ra (1+ |\la v\ra + \phi^{K, 0} (x)|^\delta)} dv (1+ K^{(-1+m)\delta}) \\
& \leq 40 \cdot 2^{\delta} \pi b^2   (H_1 + H_2)   C_\mu^2  K^{2m  }  \int_{\mathbb{R}^3} \frac{1}{ 1+ \la v\ra^\delta} dv \\
& \leq   40 \cdot  2^{\delta} \pi  b^2   (H_1 + H_2)   C_\mu^2  K^{2m  }  \cdot 3 \pi       \\
& \leq   120  \cdot  2^{\delta} \pi^2  b^2   (H_1 + H_2)  C_\mu^2  K^{2m  }     \\
\end{split}
\end{equation}
where $H_2 =  \int_\Omega  |h|^2 dx =  \|h \|^2_{L^2} $. Therefore the term $II$ is $O(K^{2m})$.

For $III=  \|\mathcal{P}^{K, +}(\hat{v}_\varphi h)\|^2_{\mathcal{H}^{K, +}}$, we use Assumption \ref{Ass:stability-general} to estimate
\begin{equation}
\begin{split}
III = \|\mathcal{P}^{K, +}(\hat{v}_\varphi h)\|^2_{\mathcal{H}^{K, +}} 
& \leq     \int_\Omega \sup_v |\mathcal{P}^{K, +} (\hat{v}_\varphi h)|^2 dx  \sup_x (\int_{\mathbb{R}^3} | \mu^{K, +}_e (e^{K, +}, p^{K, +})| dv)  \\
& \leq      K^m  H_2    \sup_x (\int_{\mathbb{R}^3} \frac{C_{\mu}}{1+ | \la v \ra + \phi^{K, 0} (x) |^\delta} dv)  \\
& \leq 2^\delta   H_2 C_\mu \cdot 2  K^m  \int_{\mathbb{R}^3} \frac{1}{1+ \la v \ra^\delta} dv \\
& \leq  2^\delta  H_2 C_2 C_\mu  K^m .  \\
\end{split}
\end{equation}  
Here $C_2 =  \frac{8}{3} \pi + \frac{4}{\delta -1} \pi^2  $. Therefore $III = O (K^{m})$.

Combining together all the estimates above, as well as Lemma \ref{unstableexampletermestimates-general}, we conclude 
\begin{equation} \label{unstableinequality0-general}
\begin{split}
& \la \mathcal{L}^{0, K} h, h \ra_{L^2} \\
& \leq 1 - H_1 C_1 C_\nu C'_\mu  K^{1+m -\epsilon}  + 120 \cdot  2^{\delta} \pi^2  b^2  (H_1 +H_2) C_\mu^2  K^{2m }  + 2^\delta H_2 C_2  C_\mu K^{m} \\
& +   256 \pi^2 c_P C_\mu^2  H_2 K^{2m}  .  \\
\end{split}
\end{equation}
Hence $( \mu^{K, \pm } ,  \phi^{K, 0}, A_\varphi^{K, 0})$ is spectrally unstable if $C_\mu$, $K$ and $h$ satisfies \eqref{unstableinequality-general}.

\end{proof}

\section{Example}
\label{S:global-cont-sp}

In this section, we present an example of parametrized family of particle density distribution functions, where Assumption \ref{Ass: mu-general-1}, Assumption \ref{prop-sol-assump} and Assumption \ref{Ass:stability-general} are all satisfied, and the theory in Sections \ref{S:global-cont-I}, \ref{S:prop-sol-general}, \ref{S:spectral-stability} can be applied.  

We take some $\mu^\pm (e, p)$, $\mu^{0, \pm} (e, p)$ which are non-negative $C^1$ functions that satisfy the following: 

\begin{assumption} \label{Ass-example-2}
\begin{equation} \label{decayassumption-sp1}
\begin{split}
& |\mu^0  (e , p )|+ |\mu^0_p (e , p )| +|\mu^0_e (e, p)| \leq \frac{C_\mu }{1+ |e|^\delta  }, \ \delta > 4 , \\ 
& |\mu^\pm  (e , p )|+ |\mu^\pm_p (e , p )| +|\mu^\pm_e (e, p)| \leq \frac{C_\mu }{1+ |e|^\delta }, \ \delta > 4 , \\
\end{split}
\end{equation}
and $\mu^{0} (e, p)$ is even in $p$. Let $\gamma \geq 0 $ be a temporarily fixed parameter. For any $K \in [0, +\infty)$, consider the equilibrium $\mu^{K, \pm}$ given by
\begin{equation} \label{E:muK-def-II}
\begin{split}
& \mu^{K, +} (e ,  p) = \gamma \mu^{0} (e ,  K p) +  a^+(K) \mu^+ (e ,  K  p)  ,   \\
& \mu^{K, -} (e ,  p) =  \gamma \mu^{0} (e ,  K p) +  a^-(K)  \mu^- (e ,   K  p) .   \\
\end{split}
\end{equation}
Here $a^\pm (K)$ are nonnegative functions of $K$ which satisfies $a^+ (0) =  a^- (0) =0$, and there exists no $K \neq 0$ such that $a^+ (K) =  a^- (K) =0$. Moreover, we assume that $(a^+)'(0)  \mu^+ (e ,  0) + a^+(0) p \mu^+_p (e ,  0)  = (a^-)'(0)  \mu^- (e ,  0) + a^-(0) p \mu^-_p (e ,  0) = 0$. 
\end{assumption}

\begin{theorem}  \label{nonneutralglobalthm-arbimass-2}
Let $\Omega$ be a $C^1$ axisymmetric bounded domain as stated in Section \ref{S:Setup}. Let $\mu^{K, \pm}$ satisfy Assumption \ref{Ass-example-2}. There exists a discrete subset $S_\gamma$ of $[0, +\infty)$, such that $0 \in S_\gamma$, and for all $\gamma \in [0, +\infty) \setminus S_\gamma$, the following holds: \\
There exists an unbounded continuous solution set or loop $\mathcal{C} := \{ ( A_\varphi^{K, 0} , \phi^{K, 0}, K) \} \subset H^2(\Omega) \times H^2(\Omega) \times \mathbb{R}$ to the system \eqref{system01-nonneutral-arbimass} -- \eqref{system02-nonneutral-arbimass} (which includes the parameter $\gamma$) with $e^{K, \pm} = \la v \ra \pm \phi^{K, 0}(x) , \  p^{K, \pm} =r ( v_\varphi \pm A_\varphi^{K , 0}(x) ) $, in which the solution $(0, 0, 0)$ is included. \\ 
Moreover, consider the (strict) single species case when there are only ions in the system ($\gamma \mu^0 (e, p) \equiv 0$, $a^- (K) \mu^-(e, p) \equiv 0$, $\mu^{K, +} > 0$ for all $K>0$) and let $a^+ (K)$ be an analytic function, $\mu^0$ and $\mu^{+}$ satisfy 
\begin{equation}   \label{Ass:analytic-1}
\begin{split}
& \big| \partial_e^k \partial_p^l \mu^0 (e, p) \big|\leq  \frac{C}{1+|e|^\delta } \ k!  \ l!  , \\
& \big| \partial_e^k \partial_p^l \mu^+ (e, p)  \big|  \leq  \frac{C}{1+|e|^\delta }  \ k!  \ l!  \\
\end{split}
\end{equation}
for all $k$, $l \in \mathbb{Z}_{\geq 0}$ and some $C>0$, then \eqref{Ass:analytic} holds, and hence the solution set $\mathcal{C}$ contains a locally analytic curve $\widetilde{\mathcal{C}}$. Moreover, $\widetilde{\mathcal{C}}$ is unbounded in the sense that \eqref{E:nonneutralglobalthm-general-1-eq0} holds. 
\end{theorem}

\begin{proof}
By Theorem \ref{nonneutralglobalthm-general-1}, we only need to show that there exists a discrete subset $S_\gamma$ of $[0, +\infty)$, such that for all $\gamma \in [0, +\infty) \setminus S_\gamma$, $\mu^{K, \pm}$ satisfy Assumption \ref{Ass: mu-general-1}. It suffices to verify 4) in Assumption \ref{Ass: mu-general-1}, since all others are straightforward (in particular, $(a^+)'(0)  \mu^+ (e ,  0) + a^+(0) p \mu^+_p (e ,  0)   = 0$ implies $\frac{\partial \mu^{K, +} (e, p)}{\partial K} |_{K=0} = 0$).

Notice that the matrix operator in 4) in Assumption \ref{Ass: mu-general-1} now can be written as
\begin{equation}
\left( \begin{array}{ccc}
\tilde{J}_{11} \ \tilde{J}_{12} \\
\tilde{J}_{21} \ \tilde{J}_{22} \\
\end{array} \right) 
: \mathcal{X} \times \mathcal{X}   \rightarrow H^2(\Omega) \times H^2(\Omega)  
\end{equation}
where
\begin{equation}
\begin{split}
& \tilde{J}_{11} (\delta u)  = \delta u , \\
& \tilde{J}_{12} (\delta w)  = -   \gamma ( -\Delta + \frac{1}{r^2})^{-1} \big\{   f_2 (r)  \delta w   \big\} ,   \\
& \tilde{J}_{21} (\delta u)  = 0 , \\
& \tilde{J}_{22} (\delta w)  = \delta w +  \gamma \Delta^{-1} \big\{  f_4 (r) \delta w   \big\} ,  \\
\end{split}  
\end{equation}
and
\begin{equation*}
\begin{split}
\\
& f_2 (r) := 2 \int_{\mathbb{R}^3} \hat{v}_\varphi \mu^{0}_e (\la v \ra , r v_\varphi  )  dv , \\
& f_4 (r) := 2 \int_{\mathbb{R}^3} \mu^{0}_e (\la v \ra , 0 )  dv . \\
\end{split}
\end{equation*}
This matrix operator is bounded with a bounded inverse when $\gamma = 0$. We apply Lemma \ref{lm-RS1} to this matrix operator with the varying parameter $\gamma$ playing the role of $z$ in the lemma. We obtain that there exists a discrete set $S_\gamma \subset [0, +\infty)$ such that for all $\gamma \in [0, +\infty) \setminus S_\gamma$, this matrix operator is bounded with a bounded inverse. This completes the proof of the first part of the theorem. 

Moreover, in the case when there are only ions in the system, with the function $a^+ (K)$ being analytic, $\mu^0$ and $\mu^{\pm}$ satisfy \eqref{Ass:analytic-1} for all $k$, $l \in \mathbb{Z}_{\geq 0}$ and some $C>0$, then
\begin{equation} 
\begin{split}
& \big| \int_{\mathbb{R}^3} \partial_1^k \partial_2^l  \mu^{0} (\la v \ra + a, r(v_\varphi +b)) dv \big| \leq \int_{\mathbb{R}^3} \frac{C}{1+|\la v \ra + a|^\delta+|r(v_\varphi +b)|^\delta} dv  \leq C_1^{k+l} \ k! \ l! , \\
& \big| \int_{\mathbb{R}^3} \hat{v}_\varphi \partial_1^k \partial_2^l  \mu^{0} (\la v \ra + a, r(v_\varphi +b)) dv \big| \leq \int_{\mathbb{R}^3} \frac{C}{1+|\la v \ra + a|^\delta+|r(v_\varphi +b)|^\delta} dv  \leq C_1^{k+l} \  k! \ l! , \\
& \big|  \int_{\mathbb{R}^3} \partial_1^k \partial_2^l  \mu^{+} (\la v \ra + a, r(v_\varphi +b)) dv  \big|  \leq \int_{\mathbb{R}^3} \frac{C}{1+|\la v \ra + a|^\delta+|r(v_\varphi +b)|^\delta} dv \leq C_2^{k+l} \ k! \ l! , \\
& \big| \int_{\mathbb{R}^3} \hat{v}_\varphi \partial_1^k \partial_2^l  \mu^{+} (\la v \ra + a, r(v_\varphi +b)) dv \big|  \leq \int_{\mathbb{R}^3} \frac{C}{1+|\la v \ra + a|^\delta+|r(v_\varphi +b)|^\delta} dv  \leq C_2^{k+l} \  k! \ l! \\
\end{split}
\end{equation}
hold for all $k$, $l \in \mathbb{Z}_{\geq 0}$, $a$, $b \in \mathbb{R}$, $C_1 = C_1(a, b) >0$, $C_2 = C_2(a, b) >0$. Taking $C(a, b, K) = \gamma C_1 (a, b) + a_+(K)   C_2(a, b)$, then \eqref{Ass:analytic} holds. Therefore the solution set $\mathcal{C}$ contains a locally analytic curve by Theorem \ref{nonneutralglobalthm-general-analytic-1}. The rest of the proof is the same as the argument in the proof of Theorem \ref{nonneutralglobalthm-general-analytic-1}.
\end{proof}


Next, let us assume furthermore
\begin{assumption} \label{Ass-example-2-1}
Assume that there are only ions in the system:
$$\gamma \mu^0 (e, p) \equiv 0 \quad \text{and} \quad  a^- (K) \mu^-(e, p) \equiv 0 \quad (\text{so } \mu^{K,-}(e, p) \equiv 0) $$
and furthermore:  \\
1) 
\begin{equation}  \label{apm-assump-2}
\begin{split}
& a^+ (K) \sim K^m \quad \text{for some $m \in (0, 1)$ when $K \rightarrow +\infty$,}
\end{split} 
\end{equation}
2) 
\begin{equation}
\mu^+_e < 0 , 
\end{equation}
\begin{equation}  \label{mu-assump-2}
\begin{split}
& \mu^+ (e, p) \geq C_{\mu^+} (1 + |e|^\alpha + |p|^\alpha  )^{-1}    \\
\end{split}
\end{equation}
for all $(e, p) \in \mathbb{R}^2$ and some constants $\alpha>6$, $C_{\mu^+} >0$. Moreover, 
\begin{equation}
p \mu^+_p (e, p) \geq C'_\mu |p| \la p  \ra^{-\epsilon} \nu (e) 
\end{equation}
for some positive function $\nu (e)$ and positive constant $\epsilon < 1 - |m|$, with 
\begin{equation}  \label{nucondition}
\nu (e) \geq C_\nu \exp (- e) 
\end{equation}
for some constant $C_\nu >0$. 
\end{assumption}

Using Proposition \ref{P:prop-sol-general}, we obtain: 
 
\begin{proposition} \label{P:prop-sol-case2}
Let $\Omega$ be a $C^1$ axisymmetric bounded domain as stated in Section \ref{S:Setup}. Then with Assumption \ref{Ass-example-2} and Assumption \ref{Ass-example-2-1}, the set $\mathcal{C}$ obtained in Theorem \ref{nonneutralglobalthm-arbimass-2} is unbounded. Moreover, there holds:
\begin{equation} \label{E:lambdaunbdd-case2}
\sup_{ \{ ( A_\varphi^{K, 0} , \phi^{K, 0}, K) \} \in \mathcal{C} }  K  = +\infty , 
\end{equation}
and
\begin{equation}
\text{As $K  \rightarrow \infty$, } \|(\phi^{K, 0}, A^{K, 0}_\varphi) \|_{L^\infty} \rightarrow +\infty .
\end{equation}
\end{proposition}

\begin{proof}
Applying Theorem \ref{nonneutralglobalthm-arbimass-2}, we conclude that the set $\mathcal{C}$ obtained in Theorem \ref{nonneutralglobalthm-arbimass-2} is unbounded. It then suffices to verify Assumption 4.1, and the only non-trivial one is \eqref{mu-assump-general}. Indeed, by Lemma \ref{L:integralbound-0}, we have
\begin{equation}  
\begin{split}
& \quad  \int_{\mathbb{R}^3} [ \mu^{K, +} (\la v \ra +a, r(v_\varphi +b)) -  \mu^{K, -} (\la v \ra -a, r(v_\varphi -b)) ] dv  \\
& =  \int_{\mathbb{R}^3}   (a^+(K) \mu^+) (\la v \ra +a, r(v_\varphi +b))   \\
& \geq   2^{-\delta} a^+(K) C_{int, 2} C_{\mu^+}  (1+|a|^\alpha + d^\alpha |b|^\alpha)^{-1}  \\
& =: N(a, b, K) .  \\
\end{split}
\end{equation}
Then when $K$ is large enough, $N(a, b, K) $ is positive and monotonically decreasing in $a$ and $b$. Moreover, for each $(e, p) \in \mathbb{R}^2$,
\begin{equation}
\begin{split}
& N (e, p, K) \rightarrow +\infty \text{ when } K \rightarrow +\infty .  \\
\end{split}
\end{equation}
\eqref{mu-assump-general} is verified. Therefore we can apply Proposition \ref{P:prop-sol-general} and obtain the desired results.
\end{proof}

 
We also consider the spectral stability of the solutions $(\phi^{K, 0}, A_\varphi^{K, 0})$. We first have the following observation for small $K$:


\begin{proposition}
Let $\Omega$ be a $C^1$ axisymmetric bounded domain as stated in Section \ref{S:Setup}. When $K$ is close to $0$ (depends on $a^+$, $\mu^+$), the equilibrium $( \mu^{K, + } (e, p) ,   \mu^{K, - } (e, p)  \equiv 0 ,   \phi^{K, 0}, A_\varphi^{K, 0})$ is spectrally stable. 
\end{proposition}

\begin{proof}
The conclusion directly follows from Proposition \ref{smallK-stable}.
\end{proof}

%

For large $K$, we have

\begin{theorem} \label{unstableexample2}
Let $\Omega$ be a $C^1$ axisymmetric bounded domain as stated in Section \ref{S:Setup}, with $\inf_{x \in \Omega} r(x) > 0$. Let Assumption \ref{Ass-example-2} and Assumption \ref{Ass-example-2-1} hold. 
Recall $d : = \sup_{x \in \Omega} r(x) $ and assume $d>1$. Let $\epsilon$ be a positive constant such that $ \epsilon <1  - |m|$, so $ \epsilon + m <1$, $ \epsilon <1 + m$. 
Then if $K$ is large enough, then the equilibrium $( \mu^{K, + } (e, p) ,   \mu^{K, - } (e, p)  \equiv 0 ,   \phi^{K, 0}, A_\varphi^{K, 0})$ is spectrally unstable. 
\end{theorem}


\begin{proof}
It is easy to verify that the assumptions in Theorem \ref{unstableexample2-general}, that is, Assumption \ref{Ass:stability-general}, hold. In particular, $p \mu^+_p (e, p) \geq C'_\mu |p| \la p  \ra^{-\epsilon} \nu (e) $ together with that $ \mu^{K, +} (e^{K, +},  p^{K, +}) = \gamma \mu^{0} (e^{K, +},  K p^{K, +}) +  a^+(K) \mu^+ (e^{K, +},  K  p^{K, +}) \sim K^m \mu^+ (e^{K, +},  K  p^{K, +})$ for large $K$ imply $p \mu^{K, +}_p (e, p) \geq C'_\mu K^{1+m -\epsilon } |p| \la p  \ra^{-\epsilon} \nu (e)$ for large $K$. Therefore we can directly apply Theorem \ref{unstableexample2-general} and obtain Theorem \ref{unstableexample2}.
\end{proof}

\begin{remark}
There does exists selections of $a^+(K)$ and $\mu^+ (e, p)$ such that both Assumption \ref{Ass-example-2} and Assumption \ref{Ass-example-2-1} hold. For instance, we can take $a^+(K)$ to be a $C^\infty$ function such that $a^+(0)= (a^+)'(0) = 0$ with $a^+(K) \sim K^{1/4} $ for large $K$, and $\mu^+ (e, p) = \frac{(|p|+1)^{3/2}}{1+|e|^6}$. 
\end{remark}

\begin{remark}
We can switch the role of ions and electrons ($+$ and $-$), and the same results in this section hold. 
\end{remark}

\appendix

\section{Derivative Operators in the Cylindrical Coordinates}
\label{AppendixA}

For the readers' convenience, we provide the information of the derivatives under the cylindrical coordinates. Using $(x_1, x_2, x_3)$ to denote the Cartesian coordinates and $(r, \varphi, z)$ to be the cylindrical coordinates, we have
$$  
x_1 = r \cos \varphi, \qquad x_2 = r \sin \varphi, \qquad  x_3 = z 
$$
and therefore
$$
e_r = (\cos \varphi, \sin \varphi , 0), \qquad e_\varphi = (- \sin \varphi , \cos \varphi, 0), \qquad e_z =(0, 0, 1) .
$$
Hence for any function $f(r, \varphi, z)$ and any vector field $ \textbf{A} (r, \varphi, z)$, we have
$$
\nabla_x f = \frac{\partial f}{\partial r} e_r + \frac{1}{r} \frac{\partial f}{\partial \varphi} e_\varphi + \frac{\partial f }{\partial z } e_z  ,
$$
$$
\Delta_x f = \frac{1}{r} \frac{\partial}{\partial r} ( r \frac{\partial f}{\partial r}) + \frac{1}{r^2} \frac{\partial^2 f}{\partial \varphi^2} + \frac{\partial^2 f}{\partial z^2}    ,
$$
$$
\nabla_x \cdot \textbf{A} = \frac{1}{r} \frac{\partial (r A_r)}{\partial r}  + \frac{1}{r} \frac{\partial A_\varphi}{\partial \varphi}  + \frac{\partial A_z }{\partial z }   ,
$$
$$
\nabla_x \times \textbf{A} = (\frac{1}{r} \frac{\partial A_z}{\partial \varphi} - \frac{\partial A_\varphi}{\partial z} ) e_r +( \frac{\partial A_r}{\partial z} - \frac{\partial A_z}{\partial r} ) e_\varphi + \frac{1}{r} ( \frac{\partial (r A_\varphi) }{\partial r } - \frac{\partial A_r}{\partial \varphi} ) e_z  .
$$

\section{Key Invariants}
\label{AppendixB}

In this section, we prove the invariance of $e^\pm (x,v) = \la v \ra \pm \phi^0 (r, z)  $ and $ p^\pm (x,v) = r  ( v_\varphi  \pm A^0_\varphi (r, z))  $ (or $ p^\pm (x,v) = r  ( v_\varphi  \pm A^0_\varphi (r, z) \pm A_{\varphi, ext} (r, z) )  $ when an external magnetic potential $\textbf{A}_{ext} = A_{\varphi, ext} (r, z) e_\varphi$ is present) along the particle trajectories given in Section \ref{S:Setup}:
\begin{equation}
\begin{split}
& \dot{X}^\pm = \hat{V}^\pm,  \  \dot{V}^\pm =  \pm \textbf{E}^0 (X^\pm) \pm \hat{V}^\pm \times \textbf{B}^0 (X^\pm) .  \\
\end{split}
\end{equation}
(Or
\begin{equation}
\dot{X}^\pm = \hat{V}^\pm,  \  \dot{V}^\pm =  \pm \textbf{E}^0 (X^\pm) \pm \hat{V}^\pm \times \textbf{B}^0 (X^\pm)  \pm \hat{V}^\pm \times \textbf{B}_{ext} (X^\pm) 
\end{equation}
when the external magnetic potential is present.)

We only compute the $+$ case for simplicity. The $-$ case is similar. In fact, along the particle trajectories, we have 
\begin{equation}
\begin{split}
\dot{e}
&  = \hat{V} \cdot \dot{V} + \dot{X} \cdot \nabla \phi^0 \\
& =   \hat{V} \cdot (\textbf{E}^0 + \hat{V} \times \textbf{B}^0 )  + \hat{V} \cdot \nabla \phi^0  \\
& =     \hat{V} \cdot (-\nabla \phi^0 + \hat{V}  \times \textbf{B}^0 )  + \hat{V}\cdot \nabla \phi^0 \\
& =0   ,
\end{split}
\end{equation}
\begin{equation}
\begin{split}
\dot{p} 
& = \dot{X} \cdot \nabla_X p + \dot{V} \cdot \nabla_V p \\
& = \dot{X} \cdot \nabla_X ( r  ( v_\varphi + A^0_\varphi (r, z)) ) + \dot{V} \cdot \nabla_V ( r  ( v_\varphi  + A^0_\varphi (r, z)) )      \\
& =  \dot{X} \cdot \nabla_X (r v_\varphi) +\dot{ X} \cdot \nabla_X (r A^0_\varphi) + \dot{V} \cdot r  \nabla_V  v_\varphi       \\
& = \dot{X} \cdot \nabla_X (r v_\varphi) + \dot{X} \cdot A^0_\varphi e_r + \dot{ X} \cdot r \frac{\partial A^0_\varphi}{\partial r} e_r + \dot{X} \cdot r \frac{\partial A^0_\varphi}{\partial z} e_z + r ( \textbf{E}^0 +  \hat{V} \times \textbf{B}^0 ) \cdot e_\varphi    \\
& = \dot{X} \cdot \nabla_X (r v_\varphi) + \dot{X} \cdot A^0_\varphi e_r + \dot{ X} \cdot r \frac{\partial A^0_\varphi}{\partial r} e_r + \dot{X} \cdot r \frac{\partial A^0_\varphi}{\partial z} e_z \\
& + r ( -\nabla\phi^0 + \dot{X} \times \frac{\partial A^0_\varphi}{\partial z} e_r + \dot{X} \times \frac{1}{r} A^0_\varphi e_z +\dot{X} \times \frac{\partial A^0_\varphi}{\partial r} e_z    ) \cdot e_\varphi      \\
& = \dot{X} \cdot \nabla_X (r v_\varphi) + \dot{X} \cdot A^0_\varphi e_r + \dot{ X} \cdot r \frac{\partial A^0_\varphi}{\partial r} e_r + \dot{X} \cdot r \frac{\partial A^0_\varphi}{\partial z} e_z - r  \frac{\partial A^0_\varphi}{\partial z} \dot{X} \cdot  e_z - A^0_\varphi   \dot{X} \cdot e_r - r \frac{\partial A^0_\varphi}{\partial r}  \dot{X} \cdot e_r      \\
& =\dot{X} \cdot \nabla_X (r v_\varphi)   . \\
\end{split}
\end{equation}
(Here $r$, $\varphi$, $z$, $v_r$, $v_\varphi$, $v_z$ are components of $X$ and $V$.) In the computation above, we used that $\nabla \phi^0$ does not have $e_\varphi$-component. We evaluate $\dot{X} \cdot \nabla_X (r v_\varphi) $ using the Cartesian coordinates:
\begin{equation}
\begin{split}
\dot{X} \cdot \nabla_X (r v_\varphi) 
& = \dot{X} \cdot \nabla_X ( r V \cdot ( -\frac{X_2}{r} , \frac{X_1}{r} , 0 )) \\
& = \dot{X} \cdot \nabla_X (- X_2 V_1 + X_1 V_2) \\
& = \frac{1}{\la V \ra} (V_1, V_2, V_3 ) \cdot (V_2, -V_1, 0)  . \\
& = 0
\end{split}
\end{equation}
Hence $\dot{p} =0$. Moreover, one can show in a similar way that $ p (x,v) = r  ( v_\varphi  + A^0_\varphi (r, z) +A_{\varphi, ext} (r, z) )  $ is invariant along the particle trajectories satisfying the ODE $\dot{X}  = \hat{V}$,  $ \dot{V}  =  \textbf{E}^0 (X ) + \hat{V} \times \textbf{B}^0 (X )  + \hat{V} \times \textbf{B}_{ext} (X) $. The invariance along the particle trajectories is now verified.

\section{Key Instrumental Theorems and Lemmas}
\label{AppendixC}


In this appendix, we state some important instrumental theorems and lemmas, which are applied in this paper as key tools. Firstly we present the following local and global implicit function theorems. Readers can refer to \cite{Kie}, \cite{CWW1}, \cite{CWW2}, etc. for detailed discussions of these results.


\begin{theorem} \label{localimplicitfunctiontheorem}
\textbf{(Local Implicit Function Theorem)}
Let $X$, $Y$, $Z$ be (real) Banach spaces. Consider a mapping $F:U \times V \rightarrow Z$ with open sets $U \subset X$, $V \subset Y$, and the equation
\begin{equation} \label{implicitfunctiontheoremequation}
F(x, y) =0 \ . 
\end{equation}
Suppose the equation \eqref{implicitfunctiontheoremequation} admits a solution $(x_0, y_0) \in U \times V$ such that the Fr\'{e}chet derivative of $F$ with respect to $x$ at $(x_0, y_0)$ is bijective, i.e.
\begin{equation}
F(x_0, y_0) =0 \ ,
\end{equation}
\begin{equation}
D_x F(x_0, y_0): X \rightarrow Z \ is \ bounded \ (continuous) \ with \ a \ bounded \ inverse.
\end{equation}
Assume also that $F$ and $D_x F$ are continuous:
\begin{equation}
F \in C (U \times V, Z) \ , 
\end{equation}
\begin{equation}
D_x F \in C (U \times V, L(X,Z)) \ . 
\end{equation}
Here $L(X, Z)$ denotes the Banach space of bounded linear operators from $X$ to $Z$ endowed with the operator norm. Then there exists a neighbourhood $U_1 \times V_1$ in $U \times V$ of $(x_0, y_0)$ and a mapping $f: V_1 \rightarrow U_1 \subset X$, such that 
\begin{equation}
f(y_0) = x_0 ,
\end{equation}
\begin{equation}
F(f(y), y) =0 \ for \ all \ y \in V_1 .
\end{equation}
Furthermore, $f$ is continuous on $V_1$:
\begin{equation}
f \in C(V_1, X)  .
\end{equation}
Finally, every solution of \eqref{implicitfunctiontheoremequation} in $U_1 \times V_1$ is of the form $(f(y), y)$.
\end{theorem}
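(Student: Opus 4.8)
The plan is the classical Banach fixed point argument. First I would reduce to a clean setting: since $U$ is open, shrink it so that a closed ball $\bar B_r(x_0) \subset U$ for some $r>0$, and abbreviate $L \defeq D_x F(x_0,y_0)$, which by hypothesis is a bounded linear bijection $X \to Z$ with bounded inverse $L^{-1}$. The key idea is to recast $F(x,y)=0$ as a fixed point problem: define $T(x,y) \defeq x - L^{-1} F(x,y)$, so that for fixed $y$, a point $x$ is a fixed point of $T(\cdot,y)$ precisely when $F(x,y)=0$.

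Next I would establish a uniform contraction estimate. Since $D_x T(x,y) = I - L^{-1} D_x F(x,y) = L^{-1}\bigl(L - D_x F(x,y)\bigr)$ and $D_x F$ is continuous with $D_x F(x_0,y_0) = L$, there is, after shrinking $r$ and choosing a neighbourhood $V' \subset V$ of $y_0$, a bound $\|D_x T(x,y)\| \le \frac{1}{2}$ for all $(x,y) \in \bar B_r(x_0) \times V'$. Because $\bar B_r(x_0)$ is convex and contained in $U$, the mean value inequality gives $\|T(x,y) - T(x',y)\| \le \frac{1}{2} \|x-x'\|$ for all $x,x' \in \bar B_r(x_0)$ and $y \in V'$, i.e. $T(\cdot,y)$ is a $\frac{1}{2}$-contraction with constant independent of $y$. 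Using the continuity of $F$ and $F(x_0,y_0)=0$, I would then shrink to a neighbourhood $V_1 \subset V'$ on which $\|L^{-1} F(x_0,y)\| \le r/2$; combined with the contraction estimate this yields $\|T(x,y) - x_0\| \le \frac{1}{2}\|x - x_0\| + \|L^{-1}F(x_0,y)\| \le r$, so $T(\cdot,y)$ maps $\bar B_r(x_0)$ into itself.

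The Banach fixed point theorem then produces, for each $y \in V_1$, a unique $f(y) \in \bar B_r(x_0)$ with $T(f(y),y)=f(y)$, equivalently $F(f(y),y)=0$; uniqueness forces $f(y_0)=x_0$ since $x_0$ solves the equation at $y_0$. Setting $U_1 \defeq B_r(x_0)$, every solution of $F(x,y)=0$ in $U_1 \times V_1$ is a fixed point of $T(\cdot,y)$ lying in $\bar B_r(x_0)$, hence equals $f(y)$, which gives the final uniqueness claim. For continuity of $f$, I would subtract the two fixed point identities and use the contraction bound: $\|f(y)-f(\tilde y)\| \le \|T(f(y),y)-T(f(\tilde y),y)\| + \|T(f(\tilde y),y)-T(f(\tilde y),\tilde y)\| \le \frac{1}{2}\|f(y)-f(\tilde y)\| + \|L^{-1}\|\,\|F(f(\tilde y),y)-F(f(\tilde y),\tilde y)\|$, so $\|f(y)-f(\tilde y)\| \le 2\|L^{-1}\|\,\|F(f(\tilde y),y)-F(f(\tilde y),\tilde y)\| \to 0$ as $y\to\tilde y$ by continuity of $F$.

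There is no deep obstacle here, as the result is classical; the only point requiring genuine care is the bookkeeping of the successive shrinkings. One must first pick $r$ small enough that $\bar B_r(x_0)\subset U$ and that the derivative bound $\|D_xT\|\le\frac{1}{2}$ holds on $\bar B_r(x_0)\times V'$, and only then choose $V_1$ so that the self-mapping property holds — making sure throughout that the contraction constant $\frac{1}{2}$ is genuinely uniform in $y\in V_1$, which is what allows $f$ to be defined on the whole neighbourhood rather than merely pointwise. If preferred, the construction of $f$ together with its continuity can be packaged in a single step by invoking the uniform contraction principle, but carrying out the estimates directly as above is equally short.
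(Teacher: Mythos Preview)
Your argument is correct and is the standard Banach fixed-point proof of the implicit function theorem in Banach spaces. Note, however, that the paper does not actually prove this statement: it is recorded as a classical tool and attributed to the references \cite{Kie}, \cite{CWW1}, \cite{CWW2}, so there is no ``paper's own proof'' to compare against. Your approach is precisely the one found in those standard sources (in particular Kielh\"ofer), so in that sense it matches.
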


\begin{theorem} \label{globalimplicitfunctiontheorem}
\textbf{(Global Implicit Function Theorem)}
Let $X$ and $Z$ be (real) Banach spaces and $U$ be an open set in $X \times \mathbb{R}$. Consider a mapping $F:U   \rightarrow Z$ that is $C^1$ in the Fr\'{e}chet sense, and the equation
\begin{equation}  \label{globalimplicitfunctiontheoremequation}
F(x, y) =0 
\end{equation}
with $x \in X$, $y \in \mathbb{R}$. Suppose the equation \eqref{globalimplicitfunctiontheoremequation} admits a solution $(x_0, y_0) \in U $ such that the Fr\'{e}chet derivative of $F$ with respect to $x$ at $(x_0, y_0)$ is bijective, i.e.
\begin{equation}
F(x_0, y_0) =0  ,
\end{equation}
\begin{equation}
D_x F(x_0, y_0): X \rightarrow Z \ is \ bounded \ (continuous) \ with \ a \ bounded \ inverse.
\end{equation}
Assume also that the mapping $ (x, y) \rightarrow F(x, y) - x $ is compact from $U$ to $X$. Let $\mathcal{S}$ be the closure in $X \times \mathbb{R}$ of the solutions set $\{(x, y) : F(x, y) =0 \}$. Let $\mathcal{C}$ be the connected component of $\mathcal{S}$ to which $(x_0, y_0)$ belongs. Then one of the following three alternatives holds: \\
i) $\mathcal{C} $ is unbounded in $X \times \mathbb{R}$; \\
ii) $\mathcal{C}\setminus \{(x_0, y_0)\}$ is connected; \\
iii) $\mathcal{C} \cap \partial U $ is not empty. 
\end{theorem}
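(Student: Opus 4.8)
Theorem~\ref{globalimplicitfunctiontheorem} is a standard result in global bifurcation theory, and for the purposes of this paper one may simply invoke \cite{Kie}, \cite{CWW1}, \cite{CWW2}; nevertheless, the argument I would reconstruct runs as follows. The starting point is to rewrite $F(x,y)=0$ as a fixed-point problem: since $(x,y)\mapsto F(x,y)-x$ is compact, the operator $T(x,y)\defeq x-F(x,y)$ is compact on $U$ and $F(x,y)=0$ if and only if $x=T(x,y)$. Two facts follow at once. First, if $(x_n,y_n)$ are solutions with $\{x_n\}$ and $\{y_n\}$ bounded, then $x_n=T(x_n,y_n)$ lies in a relatively compact set; hence $\mathcal S$ has compact intersection with every bounded subset of $X\times\mathbb R$, so if $\mathcal C$ is bounded and $\mathcal C\cap\partial U=\emptyset$ then $\mathcal C$ is compact and sits inside $U$ at a positive distance from $\partial U$. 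Second, the Leray--Schauder degree $\deg(I-T(\cdot,y),\mathcal O_y,0)$, with $\mathcal O_y\defeq\{x:(x,y)\in\mathcal O\}$, is defined and finite whenever $\mathcal O\subset X\times\mathbb R$ is bounded open, $\overline{\mathcal O}\subset U$, and $\partial\mathcal O$ contains no solution of $F=0$. Next, by hypothesis $D_xF(x_0,y_0)$ is invertible, so Theorem~\ref{localimplicitfunctiontheorem} identifies $\mathcal S$ near $(x_0,y_0)$ with a $C^1$ arc $\{(f(y),y):|y-y_0|<\varepsilon\}$ through $(x_0,y_0)$; shrinking $\varepsilon$, invertibility of $D_xF(f(y),y)$ persists, so each $(f(y),y)$ is an isolated fixed point of $T(\cdot,y)$ whose Leray--Schauder index is a fixed value $\iota\in\{-1,+1\}$, and any point of $\mathcal S$ close enough to $(x_0,y_0)$ lies on this arc.

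The global part proceeds by contradiction: assume i), ii), iii) all fail. By the previous paragraph $\mathcal C$ is compact and interior to $U$, and since $\mathcal S$ approaches $(x_0,y_0)$ only along the arc, $\mathcal C\setminus\{(x_0,y_0)\}$ has exactly two connected components, $\mathcal C^+$ containing the branch $y>y_0$ and $\mathcal C^-$ containing $y<y_0$; failure of ii) means $\mathcal C^+\neq\mathcal C^-$, so $\mathcal C^+$ and $\mathcal C^-$ are disjoint, with $\overline{\mathcal C^+}=\mathcal C^+\cup\{(x_0,y_0)\}$ and likewise for $\mathcal C^-$. I would then build a bounded open set $\mathcal O$ with $\overline{\mathcal O}\subset U$ that isolates $\mathcal D^+\defeq\mathcal C^+\cup\{(x_0,y_0)\}$, obtained by gluing a small ``upward-opening'' neighborhood of $(x_0,y_0)$ — one containing the half-arc $y\geq y_0$ but disjoint from the half-arc $y<y_0$, and having $(x_0,y_0)$ on its boundary — to a thin tubular neighborhood of the compact set $\mathcal C^+$ minus a small ball about $(x_0,y_0)$. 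Using a Whyburn-type separation lemma for compact metric spaces — passing first to a relatively clopen neighborhood of the component $\mathcal C$ inside a large compact piece $\mathcal S\cap\overline{B}$, then separating $\mathcal D^+$ away from $\mathcal C^-$ and from the rest of $\mathcal S$ — one arranges that $\partial\mathcal O$ meets $\mathcal S$ only at $(x_0,y_0)$, that $\mathcal O_y$ contains a fixed small ball $B_\rho(x_0)$ for $y\in(y_0,y_0+\varepsilon)$, and that $f(y)\notin\mathcal O$ for $y\in(y_0-\varepsilon,y_0)$. Set $d(y)\defeq\deg(I-T(\cdot,y),\mathcal O_y,0)$; it is defined for every $y\neq y_0$ and, being locally constant, is constant on $(y_0,\infty)$ and on $(-\infty,y_0)$. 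Boundedness of $\mathcal O$ gives $d\equiv0$ for extreme $y$, hence $d\equiv0$ on both intervals. On the other hand, excision across the sphere $\partial B_\rho(x_0)$, which carries no solutions at levels near $y_0$ when $\rho$ is small, splits $d(y)$ into a contribution from $B_\rho(x_0)$ plus a remainder $e(y)$ continuous through $y_0$; the first contribution is $0$ for $y<y_0$ (no solution inside) and $\iota$ for $y>y_0$ (just the nondegenerate solution $f(y)$). Comparing: $0=d(y_0^-)=0+e(y_0^-)$ forces $e(y_0^-)=0$, hence $e(y_0^+)=0$, hence $0=d(y_0^+)=\iota+0=\iota$, contradicting $\iota=\pm1$.

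The main obstacle is precisely the construction of the isolating set $\mathcal O$. Ensuring that $\partial\mathcal O$ avoids the entire — possibly pathologically structured — solution set $\mathcal S$ is what forces the Whyburn separation lemma and the passage to relatively clopen neighborhoods of the component $\mathcal C$; and the requirement that $\mathcal O$ open cleanly ``upward'' at the base point, so that the local contribution to the slice degree jumps by $\iota$ across $y_0$, is exactly the step that uses $\mathcal C^+\neq\mathcal C^-$ and thus explains why the loop alternative ii) must be allowed. The remaining ingredients — the local implicit function theorem, the compactness bookkeeping, the index computation, and the homotopy invariance and excision properties of the Leray--Schauder degree — are routine.
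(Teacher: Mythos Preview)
The paper does not prove Theorem~\ref{globalimplicitfunctiontheorem}; it is stated as a tool and the reader is referred to \cite{Kie}, \cite{CWW1}, \cite{CWW2} for the argument. Your proposal correctly flags this in its first sentence, and the degree-theoretic reconstruction you then supply --- rewriting $F(x,y)=0$ as a compact fixed-point problem, using the local implicit function theorem to identify the arc through $(x_0,y_0)$, isolating one subbranch via a Whyburn separation, and deriving a contradiction from the jump in the slice Leray--Schauder degree across $y_0$ --- is precisely the standard Rabinowitz-type proof that those references contain, so there is nothing to correct or contrast.
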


\begin{theorem} \label{analytic-globalimplicitfunctiontheorem}
\textbf{(Analytic Global Implicit Function Theorem)}
Let $X$ and $Z$ be (real) Banach spaces, $I$ be an open interval (possibly unbounded) with $0 \in \overline{I}$, and $U$ be an open set in $X$ with $0 \in \partial U$. Consider a mapping $F: U \times I \rightarrow Z$ that is real-analytic in the Fr\'{e}chet sense, and the equation
\begin{equation}  \label{analytic-globalimplicitfunctiontheoremequation}
F(x, y) =0 
\end{equation}
with $x \in X$, $y \in I$. Assume that for any $(x, y) \in U \times I$ with $F(x, y) =0$, the Fr\'{e}chet derivative $D_x F (x, y): X \rightarrow Z$ is Fredholm with index $0$. \\
Suppose that there exists a continuous curve $\mathcal{C}_{loc}$ of solution to  \eqref{analytic-globalimplicitfunctiontheoremequation}, parametrized as
\begin{equation*}
\mathcal{C}_{loc} := \{ ( \tilde{x}(y), y) : 0 < y < y_* \} \subset F^{-1} (0) 
\end{equation*}
for some $y_* >0$ and continuous $\tilde{x} : (0, y_*) \rightarrow U$. If
\begin{equation*}
\lim_{y \rightarrow 0^+} \tilde{x} (y) = 0 \in \partial U , \quad  D_x F (\tilde{x} (y) , y) : X \rightarrow Z \text{ is  invertible for all } y , 
\end{equation*}
then $\mathcal{C}_{loc} $ is contained in a curve of solutions $\mathcal{C}$, parametrized as
\begin{equation*}
\mathcal{C} := \{ ( x(s), y(s)) : 0 < s < +\infty\} \subset F^{-1} (0) 
\end{equation*}
for some continuous function $ s \mapsto ( x(s), y(s)) \in U \times I$, $s \in (0, +\infty)$, with the following properties: \\
1) One of the following alternatives holds: \\
i) As $s \rightarrow +\infty$, 
\begin{equation*}
N(s) := \|x(s)\|_X + y(s) + \frac{1}{dist (x(s), \partial U)} + \frac{1}{dist (y(s), \partial I)} \rightarrow +\infty . 
\end{equation*}
ii) There exists a sequence $s_n \rightarrow +\infty$ such that $\sup_n N(s_n) < +\infty$ but $\{ x(s_n) \}$ has no subsequences converging in $X$; \\
2) Near each point $(x(s_0), y(s_0) ) \in \mathcal{C}$, we can reparametrize $\mathcal{C}$ so that $s \mapsto (x(s), y(s))$ is real-analytic;  \\
3) $(x(s), y(s)) \notin \mathcal{C}_{loc}$ for $s$ sufficiently large. 
\end{theorem}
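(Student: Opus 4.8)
The plan is to adapt the now-standard proof of the analytic global bifurcation theorem of Dancer and of Buffoni and Toland, in the form used in \cite{CWW1}, \cite{CWW2} (see also \cite{Kie}). There are two ingredients: a \emph{local} description of the solution set $F^{-1}(0)$ near an arbitrary solution point, obtained by a Lyapunov--Schmidt reduction together with the structure theory of real-analytic sets; and a \emph{global} continuation that glues the local pieces into a maximal curve $\mathcal{C}$, the dichotomy in property~1) reflecting the only two ways the continuation can fail to proceed. \textbf{Step 1 (local structure of $F^{-1}(0)$).} Fix $(x_0, y_0) \in F^{-1}(0) \cap (U\times I)$. Since $D_x F(x_0,y_0)$ is Fredholm of index $0$, split $X = X_0 \oplus X_1$ with $X_0 = \ker D_x F(x_0,y_0)$, and $Z = Z_1 \oplus Z_0$ with $Z_1$ the range of $D_x F(x_0,y_0)$, where $\dim X_0 = \dim Z_0 = n < \infty$; let $Q : Z \to Z_1$ be the projection with kernel $Z_0$. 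Writing $x = x_0 + \xi + w$ with $\xi \in X_0$, $w \in X_1$, the equation $Q F(x,y) = 0$ can be solved for $w = w(\xi,y)$ near $(0,y_0)$ by the analytic implicit function theorem (the real-analytic version of Theorem~\ref{localimplicitfunctiontheorem}), with $w$ real-analytic, so that $F(x,y) = 0$ is equivalent, near $(x_0,y_0)$, to the finite-dimensional real-analytic equation
\begin{equation*}
\Phi(\xi,y) := (I-Q)\,F\big(x_0 + \xi + w(\xi,y),\, y\big) = 0, \qquad \Phi : \mathbb{R}^{n} \times I \to \mathbb{R}^{n} \ \text{near} \ (0,y_0) .
\end{equation*}
Thus $F^{-1}(0)$ near $(x_0,y_0)$ is homeomorphic, via $(\xi,y) \mapsto x_0 + \xi + w(\xi,y)$, to the real-analytic set $\Phi^{-1}(0) \subset \mathbb{R}^{n+1}$. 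By the local parametrization/structure theorem for real-analytic sets (Puiseux-type expansions; see the book of Buffoni and Toland and the appendices of \cite{CWW1}, \cite{CWW2}), either $\Phi^{-1}(0)$ near $(0,y_0)$ is the single point $\{(0,y_0)\}$, or it is a finite union of real-analytic arcs issuing from $(0,y_0)$, each admitting an injective real-analytic parametrization; moreover, along any such arc the parameter values at which $D_x F$ fails to be invertible are isolated unless the whole arc consists of singular solutions. Along $\mathcal{C}_{loc}$ one has $n=0$, so this reduces to the ordinary analytic implicit function theorem and $\mathcal{C}_{loc}$ is automatically real-analytic.

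\textbf{Step 2 (maximal continuation).} Reparametrize $\mathcal{C}_{loc}$ on a parameter interval $(0,s_1)$, with its $X$-component tending to $0 \in \partial U$ as $s \to 0^+$. Consider the collection of continuous curves $\gamma : (0,S_\gamma) \to F^{-1}(0) \cap (U\times I)$ that contain $\mathcal{C}_{loc}$, are locally real-analytically reparametrizable in the sense of Step~1, and are non-self-retracing (at each point the continuation follows one of the finitely many local analytic arcs and never immediately doubles back), ordered by extension. A non-maximal such curve always admits a strict extension by Step~1: at its open endpoint it accumulates at a solution point that is either regular, where the analytic implicit function theorem extends it, or singular, where one of the finitely many local analytic arcs extends it; and the union of a chain of such curves is again such a curve. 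Zorn's lemma then yields a maximal curve, which after rescaling the parameter we write $\mathcal{C} = \{(x(s),y(s)) : 0 < s < +\infty\}$. Property~2) is immediate, since $\mathcal{C}$ is patched from analytic arcs joined at isolated points through which the patched parametrization is again analytic.

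\textbf{Step 3 (the alternative, and property 3).} Suppose neither i) nor ii) holds. Since i) fails, $N(s)$ does not tend to $+\infty$, so there is a sequence $s_m \to +\infty$ with $\sup_m N(s_m) =: M < \infty$; since ii) fails for this sequence, $x(s_{m_k}) \to x_\infty$ in $X$ along a subsequence. The bound $N(s_m) \le M$ keeps $x(s_m)$ and $y(s_m)$ bounded and a fixed distance $\ge 1/M$ from $\partial U$ and $\partial I$, so along a further subsequence $y(s_{m_k}) \to y_\infty$ with $(x_\infty,y_\infty) \in U\times I$, and $F(x_\infty,y_\infty) = 0$ by continuity. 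By Step~1, for large $s$ the curve $\mathcal{C}$ lies in the union of finitely many local analytic arcs through the interior solution point $(x_\infty,y_\infty)$ and can therefore be properly extended as a non-self-retracing locally analytic curve, contradicting maximality; hence i) or ii) must hold. For property~3): if $(x(s),y(s)) \in \mathcal{C}_{loc}$ along a sequence $s \to +\infty$, then a local-uniqueness argument at the corresponding points of $\mathcal{C}_{loc}$ (where $D_x F$ is invertible, so $F^{-1}(0)$ is locally the single arc $\mathcal{C}_{loc}$) forces $\mathcal{C}$ to coincide with $\mathcal{C}_{loc}$ along a subarc and hence, by analytic continuation and the non-self-retracing property, to be a closed analytic loop. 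But a closed loop is compact and contained in $U\times I$, whereas $\overline{\mathcal{C}} \supseteq \overline{\mathcal{C}_{loc}} \ni (0,0) \notin U\times I$ because $\lim_{y\to 0^+}\tilde x(y) = 0 \in \partial U$. This contradiction establishes 3) and completes the proof.

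\textbf{Main obstacle.} The delicate point is Step~1: passing from the Lyapunov--Schmidt-reduced finite-dimensional real-analytic equation $\Phi = 0$ to a genuine finite union of injectively, real-analytically parametrized arcs, on which both the unambiguous continuation of Step~2 and property~2) depend. This rests on the local structure theory of real-analytic sets (Puiseux series, {\L}ojasiewicz's theorems), which must be applied with care when $n \ge 1$ and when arcs consisting entirely of singular solutions occur. A second, softer difficulty --- caused by the absence of any compactness hypothesis on $(x,y)\mapsto F(x,y) - x$, in contrast with the topological Rabinowitz-type alternative --- is the bookkeeping of Step~3: ensuring that a uniform bound on $N$ along a sequence really yields an interior accumulation point from which the maximal curve extends, with alternative ii) isolated as precisely the obstruction when no such point exists.
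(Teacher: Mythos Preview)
The paper does not supply its own proof of this theorem: it is stated in Section~\ref{S:Setup} as a tool result, with the remark that readers may consult \cite{Kie}, \cite{CWW1}, \cite{CWW2} for detailed discussions, and is then simply invoked in the proof of Theorem~\ref{nonneutralglobalthm-general-analytic-1}. So there is no in-paper proof to compare against.

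Your sketch follows precisely the Dancer/Buffoni--Toland route that those references develop, and the overall architecture (Lyapunov--Schmidt reduction to a finite-dimensional real-analytic variety, local structure of that variety as a finite union of analytic arcs, maximal non-self-retracing continuation, then the blow-up/loss-of-compactness dichotomy) is the right one and matches what the paper is implicitly citing.

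One point in Step~3 deserves a little more care. From the existence of an interior accumulation point $(x_\infty,y_\infty)$ you jump directly to ``$\mathcal{C}$ can therefore be properly extended \ldots\ contradicting maximality.'' But an interior accumulation point need not by itself contradict maximality: the standard argument is rather that, because only finitely many analytic arcs pass through $(x_\infty,y_\infty)$ and the curve is non-self-retracing, infinitely many returns force the curve to be \emph{periodic} (a closed loop). You do invoke the closed-loop scenario, but only afterwards in your argument for property~3); in the standard treatment the loop case is what actually closes the dichotomy argument for property~1), and it is then ruled out separately (here by $0\in\partial U$ lying in $\overline{\mathcal{C}_{loc}}$, exactly as you say). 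Reordering so that ``neither i) nor ii) $\Rightarrow$ closed loop $\Rightarrow$ contradiction with $0\in\partial U$'' would make the logic cleaner and match the cited sources more closely.
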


We also introduce the following result from \cite{RS1} Volume 1, which we need it for its real value version:
\begin{lemma} \label{lm-RS1}
\textbf{(\cite{RS1} Volume 1, Theorem VI.14, analytic Fredholm theorem)}
Let $D$ be an open connected subset of $\mathbb{C}$. Let $f: D \rightarrow L(X, Z)$ be an analytic operator-valued function such that $f(z)$ is compact for each $z \in D$. Then one of the following two alternatives holds: \\
i) $(I - f(z))^{-1}$ exists for no $z \in D$; \\
ii) $(I - f(z))^{-1}$ exists for all $z \in D\setminus S $, where $S$ is a discrete subset of $D$. In this case, $(I - f(z))^{-1}$ is meromorphic in $D$, analytic in $D\setminus S $, the residues at he pole are finite rank operators, and for $z \in S $ the equation $f(z) g =g$ has a non-zero solution in $X$.   
\end{lemma}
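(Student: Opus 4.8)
This is the classical analytic Fredholm alternative of \cite{RS1}; we sketch the standard proof. Throughout $Z=X$ (so that $I-f(z)$ is meaningful), and we work in the Hilbert-space setting of \cite{RS1}, which covers the applications below. The plan is to reduce, locally in $z$, the invertibility of $I-f(z)$ to the non-vanishing of a scalar analytic function, and then to glue the resulting local dichotomy using the connectedness of $D$. To carry out the localization, fix $z_0\in D$. Since $f(z_0)$ is compact, it is an operator-norm limit of finite-rank operators, so we may choose a finite-rank $F$ with $\|f(z_0)-F\|<\tfrac12$; by continuity there is a disc $D_0=\{|z-z_0|<r\}\subset D$ on which $\|f(z)-F\|<1$. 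Then $I-(f(z)-F)$ is invertible on $D_0$ with analytic inverse $\sum_{n\ge0}(f(z)-F)^n$, and from
\[
I-f(z)=\bigl(I-(f(z)-F)\bigr)\bigl(I-G(z)\bigr),\qquad G(z):=\bigl(I-(f(z)-F)\bigr)^{-1}F,
\]
we see that $I-f(z)$ is invertible on $D_0$ iff $I-G(z)$ is, where $G(z)$ is analytic and finite-rank of rank $\le N:=\operatorname{rank}F$. Writing $F=\sum_{j=1}^{N}\langle\phi_j,\cdot\rangle\psi_j$ gives $G(z)u=\sum_j\langle\phi_j,u\rangle\eta_j(z)$ with $\eta_j$ analytic, so the equation $(I-G(z))u=v$ reduces to the $N\times N$ linear system $(I-M(z))c=b$ with $M(z)_{ij}=\langle\phi_i,\eta_j(z)\rangle$ analytic; hence $I-f(z)$ is invertible on $D_0$ exactly where $d(z):=\det(I-M(z))\ne0$.

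Being scalar analytic on the disc, $d$ is either identically zero on $D_0$ — in which case $(I-f(z))^{-1}$ exists nowhere on $D_0$ — or has only isolated zeros there; in the latter case Cramer's rule exhibits $(I-M(z))^{-1}$, hence $(I-G(z))^{-1}=I+(I-G(z))^{-1}G(z)$ and $(I-f(z))^{-1}$, as meromorphic on $D_0$ with poles only at the zeros of $d$ and with finite-rank principal parts, so the residues are finite-rank; and at a zero $z_*$ of $d$ the singularity of $I-M(z_*)$ produces a nonzero $c$ with $(I-M(z_*))c=0$, whence $g:=\sum_j c_j\eta_j(z_*)\ne0$ solves $f(z_*)g=g$. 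To globalize, let $A$ be the set of $z\in D$ having a neighborhood on which $(I-f(\cdot))^{-1}$ exists nowhere, and $B$ the set of $z\in D$ having a neighborhood meeting $S:=\{z:(I-f(z))^{-1}\text{ does not exist}\}$ in a discrete set. The local analysis gives $A\cup B=D$; both sets are open; and $A\cap B=\varnothing$ because a nonempty open subset of $\mathbb{C}$ cannot be discrete. Connectedness of $D$ then forces $D=A$, which is alternative (i), or $D=B$, in which case $S$ is discrete in $D$ and the meromorphy of $(I-f(z))^{-1}$, the finite rank of its residues, and the solvability of $f(z)g=g$ with $g\ne0$ on $S$ follow by patching the local descriptions.

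For the real-valued version used below, one complexifies: near each point $t_0$ of the interval, a real-analytic family of compact operators is the restriction of its Taylor series $\sum_n(z-t_0)^nf^{(n)}(t_0)/n!$, which converges in operator norm on a complex disc and has compact coefficients (being operator-norm limits of difference quotients of compact operators), hence is a compact-operator-valued complex-analytic extension; these local extensions agree on overlaps by the identity theorem, so they glue to a compact-operator-valued analytic $\tilde f$ on a connected open $\tilde D\subset\mathbb{C}$ containing the interval, and applying the lemma to $\tilde f$ on $\tilde D$ and restricting to the real axis gives the real statement. The only genuinely delicate point is the globalization step — checking that $B$ is open and that $A\cap B=\varnothing$, i.e.\ the bookkeeping that turns the purely local dichotomy into a global one; everything else is Neumann-series estimates and finite-dimensional linear algebra.
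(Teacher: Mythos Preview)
Your sketch is correct and is essentially the argument given in Reed--Simon itself: the finite-rank approximation, the factorization $I-f(z)=(I-(f(z)-F))(I-G(z))$, the reduction to an analytic $N\times N$ determinant, and the open--closed/connectedness globalization are exactly the steps of Theorem~VI.14 there. Note, however, that the paper does not supply a proof of this lemma at all---it is quoted as a black-box tool from \cite{RS1} and used (in its real-analytic version) to show that the linearization $D_{(u,w)}G$ is invertible away from a discrete set of parameter values. So there is no ``paper's own proof'' to compare against; you have simply filled in the standard reference, and your added paragraph on complexifying the real-analytic family is precisely the justification needed for the way the lemma is invoked in Theorems~\ref{nonneutralglobalthm-arbimass} and~\ref{nonneutralglobalthm-arbimass-2}.
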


Moreover, we introduce the following maximum-principle-type result on elliptic equations.


\begin{lemma} \label{lm:ellipticlowerbound}
Let $\Omega$ be a $C^1$ bounded domain in $\mathbb{R}^N$ ($N \leq 4$). Assume $u \in H^2 (\Omega)$ satisfies 
\begin{equation}
\begin{split}
& -\Delta u \geq \alpha (x) \zeta (u) , \ \text{for} \ x \in \Omega, \\ 
& u \geq 0 , \ \text{for} \ x \in \partial \Omega, \\ 
\end{split}
\end{equation}
where $\alpha$ is a non-negative function and $\zeta: [0, a_\zeta) \rightarrow [0, +\infty)$ $(0 < a_\zeta \leq +\infty)$ is a non-decreasing $C^1$ function such that $\zeta (0) > 0$. Let $F(t):= \int^t_0 \frac{ds}{\zeta (s)}$, $d_\Omega (x) := dist (x, \partial \Omega)$. Then  
\begin{equation}
F(u(x)) \geq  \alpha_x (d_\Omega (x)) \frac{d_\Omega (x)^2}{2N}  \ \text{for all} \ x \in \Omega  \text{ (a.e.) } ,
\end{equation}
and equivalently (since $F(t)$ is a non-decreasing function)
\begin{equation}
u(x) \geq F^{-1} (\alpha_x (d_\Omega (x)) \frac{d_\Omega (x)^2}{2N} ) \ \text{for all} \ x \in \Omega \text{ (a.e.) }  .
\end{equation}
Here $\alpha_x (r) := \inf_{y \in B_r (x)} \alpha (y) $, $B_r (x) = \{ y : |y-x| < r \}$.  
\end{lemma}


\begin{proof}
The proof can be adapted from Theorem 2.2 in \cite{AT1}. 
Let $\psi_\alpha$ be the unique solution of the equation
\begin{equation*}
\begin{split}
& -\Delta u = \alpha (x) , \ \text{for} \ x \in \Omega, \\ 
& u = 0 , \ \text{for} \ x \in \partial \Omega, \\ 
\end{split}
\end{equation*}
where $\alpha$ is a non-negative function. Denote
\begin{equation*}
\alpha_x (r) := \inf_{y \in B_r (x)} \alpha (y) \quad ( 0 < r \leq d_\Omega (x) = dist (x, \partial \Omega) ) 
\end{equation*}
Using the product rule for the divergence of product of a scalar valued function and a vector field, $F'> 0$ and $F'' \leq 0$ we compute
\begin{equation*}
\begin{split}
\Delta F(u) 
& = div (\nabla F(u)) = div (F'(u) \nabla u)  \\
& = \nabla (F'(u)) \cdot \nabla u + F'(u) div (\nabla u)  \\
& = F''(u) \nabla u \cdot \nabla u + F'(u) \Delta u  \\
& = F''(u) | \nabla u|^2 + F'(u) \Delta u \leq F'(u) \Delta u .   \\
\end{split}
\end{equation*}
Hence $-\Delta F(u) \geq F'(u) (-\Delta u)$. Moreover, 
\begin{equation*}
\begin{split}
D_i D_j F(u) 
& = D_i (F'(u) D_j u) =  F''(u) D_i u D_j u + F'(u) D_i D_j u . \\
\end{split}
\end{equation*}
Notice that since $\zeta (s)$ is in $C^1$, $F(t)= \int^t_0 \frac{ds}{\zeta (s)}$ is in $C^2$. Hence $F'(u) D_i D_j u  \in L^2 (\Omega)$. Also, $D_i u$, $D_j u \in L^4 (\Omega)$ by Sobolev embedding, so $F''(u) D_i u D_j u \in L^2 (\Omega)$ by H\"older's inequality. Thus, $F(u) \in H^2 (\Omega)$. 

Now, from $-\Delta u \geq \alpha (x) \zeta (u)$, we have
\begin{equation*}
\begin{split}
-\Delta F(u) \geq F'(u) (-\Delta u) = \frac{1}{\zeta(u)} (-\Delta u) \geq \alpha (x) = -\Delta \psi_\alpha . 
\end{split}
\end{equation*}
Since we have $F(u) = \psi_\alpha = 0$ on $\partial \Omega$, by the maximum principle (see, for example, \cite{GT1}, Theorem 8.1), we obtain $F(u(x)) \geq \psi_\alpha (x)$ in $\Omega$. 

To estimate $\psi_\alpha$, we notice that for $y \in B_{d_\Omega (x)} (x)$, 
\begin{equation*}
-\Delta \psi_\alpha (y) = \alpha (y) \geq \alpha_x (d_\Omega (x)) . 
\end{equation*}
Consider the function
\begin{equation*}
w(y) = \frac{d_\Omega (x)^2 - |x-y|^2}{2N}.  
\end{equation*}
This function satisfies $-\Delta w = 1$ in $B_{d_\Omega (x)} (x)$ and $w =0$ on $\partial B_{d_\Omega (x)} (x)$. Hence
\begin{equation*}
-\Delta \psi_\alpha (y) \geq  -\Delta (\alpha_x (d_\Omega (x)) w (y)) . 
\end{equation*}
By maximum principle, 
\begin{equation*}
\psi_\alpha (y) \geq   \alpha_x (d_\Omega (x)) w (y) . 
\end{equation*}
Combining the results above, we obtain
\begin{equation*}
F(u(x)) \geq  \alpha_x (d_\Omega (x)) \frac{d_\Omega (x)^2-|x-y|^2}{2N}  \ \text{if} \ |y-x| < d_\Omega (x) . 
\end{equation*}
Taking $x=y$, we have
\begin{equation*}
F(u(x)) \geq  \alpha_x (d_\Omega (x)) \frac{d_\Omega (x)^2}{2N}  \ \text{for all} \ x \in \Omega \text{ (a.e.) } . 
\end{equation*}
\end{proof}

\end{document}